\documentclass[reqno]{amsart}
\usepackage{mathrsfs}
\usepackage{amssymb, amsmath, amsthm}
\usepackage{latexsym}
\usepackage{yfonts}
\usepackage{mathrsfs}
\usepackage{natbib}
\usepackage{graphicx,color}
\usepackage[pdftex,plainpages=false,colorlinks,hyperindex,bookmarksopen,linkcolor=blue,citecolor=red,urlcolor=red]{hyperref}
\usepackage[final]{showkeys}
\usepackage{bm}
\usepackage{subfig}
\DeclareMathAlphabet{\mathpzc}{OT1}{pzc}{m}{it}

\bibpunct{[}{]}{;}{n}{,}{,}

\newtheorem{te}{Theorem}[section]

\theoremstyle{definition}

\theoremstyle{os}
\newtheorem{os}[te]{Remark}

\theoremstyle{pr}
\newtheorem{pr}[te]{Proposition}

\theoremstyle{lem}

\theoremstyle{co}
\newtheorem{co}[te]{Corollary}

\theoremstyle{conj}

\numberwithin{equation}{section}

\allowdisplaybreaks

\begin{document}

	\title{Even-order pseudoprocesses on a circle and related Poisson kernels}
	\author{Enzo Orsingher}
	\email{enzo.orsingher@uniroma1.it}
	\author{Bruno Toaldo}
	\email{bruno.toaldo@uniroma1.it}	
	\address{Department of Statistical Sciences, Sapienza University of Rome}
	\keywords{Pseudoprocesses, Poisson kernel, Von Mises circular law, Fractional equations, Stable processes, Mittag-Leffler functions, Higher-order heat-type equations, Fractional Laplacian}
	\date{\today}
	\dedicatory{}
	\subjclass[2000]{60G20, 60J25, 30C40}

\begin{abstract}
Pseudoprocesses, constructed by means of the solutions of higher-order heat-type equations have been developed by several authors and many related functionals have been analyzed by means of the Feynman-Kac functional or by means of the Spitzer identity. We here examine even-order pseudoprocesses wrapped up on circles and derive their explicit signed density measures. We observe that circular even-order pseudoprocesses differ substantially from pseudoprocesses on the line because - for $t> \bar{t} > 0$, where $\bar{t}$ is a suitable $n$-dependent time value - they become real random variables. By composing the circular pseudoprocesses with positively-skewed stable processes we arrive at genuine circular processes whose distribution, in the form of Poisson kernels, is obtained. The distribution of circular even-order pseudoprocesses is similar to the Von Mises (or Fisher) circular normal and therefore to the wrapped up law of Brownian motion. Time-fractional and space-fractional equations related to processes and pseudoprocesses on the unit radius circumference are introduced and analyzed.
\end{abstract}

\maketitle

\section{Introduction and preliminaries}

Pseudoprocesses are connected with the fundamental solution of heat-type equations of the form
\begin{equation}
\frac{\partial}{\partial t} u_n(x, t) \, = \, c_n \frac{\partial^n}{\partial x^n} u_n(x, t), \qquad x \in \mathbb{R}, \, t>0, \, n \in \mathbb{N},
\label{1.1}
\end{equation}
where
\begin{equation}
c_n \, = \, 
\begin{cases}
(-1)^{\frac{n}{2}+1}, \qquad & \textrm{for even values of } n \\
\pm 1,  & \textrm{for odd values of } n,
\end{cases}
\end{equation}
subject to the initial condition
\begin{equation}
u(x, 0) \, = \, \delta (x).
\end{equation}
For $n > 2$ the fundamental solutions to \eqref{1.1} are sign-varying. By means of a Wiener-type approach some authors (see for example Albeverio et al. \cite{smorodina}, Daletsky \cite{dale1}, Daletsky and Fomin \cite{dale2}, Krylov \cite{krylov}, Ladohin \cite{nonpos}) have constructed pseudoprocesses which we denote by $X(t)$, $t>0$ or $X_n (t)$, if we specify the order of the governing equation. In these papers the set of real functions $x:t \in \left[ 0, \infty \right) \rightarrow x(t)$ (sample paths) and the cylinders
\begin{equation}
C \, = \, \left\lbrace x (t) : a_j \leq x(t_j) \leq b_j, \, j = 1, \cdots, n \right\rbrace
\label{1.2}
\end{equation}
have been considered.
By using the solutions $u_n$ to \eqref{1.1} the measure of cylinders is given as
\begin{equation}
\mu_n \left( C \right) \, = \, \int_{a_1}^{b_1} dx_1 \, \cdots \, \int_{a_n}^{b_n} dx_n \, \prod_{j=1}^n \, u_n \left( x_j - x_{j-1}, t_j - t_{j-1} \right).
\label{1.3}
\end{equation}
In \eqref{1.3} we denote by $u_n$
\begin{equation}
u_n(x, t) \, = \, \frac{1}{2\pi} \int_{-\infty}^\infty d\xi \,  e^{-i\xi x} e^{c_n (-i\xi)^n t}.
\label{16}
\end{equation}
For $n=2k$ and $c_{2k} = (-1)^{k+1}$ the integral \eqref{16} always converge as it does for the odd-order case.
The measure \eqref{1.3} is extended to the field generated by cylinders \eqref{1.2} for fixed $t_1 < \cdots < t_j < \cdots < t_n$. The signed measure obtained in this way is Markovian in the sense that
\begin{equation}
\mu_{x_0} \left\lbrace X\left( t+T \right) \in \mathpzc{B} \big| \mathpzc{F}_T \right\rbrace \, = \, \mu_{X(T)} \left\lbrace X(t) \in \mathpzc{B} \right\rbrace,
\end{equation}
where $\mathpzc{F}_T$ is the field generated as
\begin{equation}
\mathpzc{F}_T \, = \, \sigma \left\lbrace X(t_1) \in \mathpzc{B}_1, \cdots, X (t_n) \in \mathpzc{B}_n \right\rbrace,
\end{equation}
where $0 \leq t_1 \leq \cdots \leq t_n = T$.
More information on properties of pseudoprocesses can be found in Cammarota and Lachal \cite{camma},  Lachal \cite{lachal2003}  and Nishioka \cite{nishio}. For pseudoprocesses with drift the reader can consult Lachal \cite{lachal2008}.

In this paper we consider pseudoprocesses on the ring $\mathpzc{R}$ of radius one, denoted by $\Theta (t)$, $t>0$, whose signed density measures are governed by
\begin{equation}
\begin{cases}
\frac{\partial }{\partial t} v_n (\theta, t) \, = \, c_n \frac{\partial^n}{\partial \theta^n} v_n (\theta, t), \qquad \theta \in [0,2\pi), t >0, n \geq 2, \\
v_n (\theta, 0) \, = \, \delta (\theta).
\end{cases}
\label{problbound}
\end{equation}
The signed measures of pseudoprocesses on the line $X(t)$, $t>0$, and those on the unit-radius ring, $\Theta (t)$, $t>0$, can be related by
\begin{equation}
\left\lbrace \Theta (t) \in d\theta \right\rbrace \, = \,  \bigcup_{m=-\infty}^\infty \left\lbrace X(t) \in d(\theta + 2m\pi) \right\rbrace, \qquad 0 \leq \theta < 2\pi.
\label{traiettoriepseudo}
\end{equation}
This means that the pseudoprocess $\Theta$ has sample paths which are obtained from those of $X$ by wrapping them up around the circumference $\mathpzc{R}$. Counterclockwise moving sample paths of $\Theta$ correspond to increasing sample paths of $X$. 

For $n=2$ we have in particular the circular Brownian motion studied by Hartman and Watson \cite{normal}, Roberts and Ursell \cite{ursel}, Stephens \cite{biometrika}.
The pseudoprocesses running on $\mathpzc{R}$ are called circular pseudoprocesses and are denoted either by $\Theta (t)$, $t>0$, or $\Theta_n (t)$ if we want to clarify the order of the equation governing their distribution.
We concentrate our attention on the even-order case because the odd-order wrapped-up pseudoprocesses pose qualitatively different problems of convergence of their Fourier expansion. In view of \eqref{traiettoriepseudo} we can write
\begin{equation}
v_{2n} (\theta, t) \, = \, \sum_{m=-\infty}^\infty u_{2n} (\theta + 2m\pi, t), \qquad 0 \leq \theta < 2\pi.
\label{wrappingdistr}
\end{equation}
Equation \eqref{wrappingdistr} shows that the solution to \eqref{problbound} can be obtained by wrapping up the solution to \eqref{1.1} which reads
\begin{equation}
u_{2n} (x, t) \, = \, \frac{1}{2\pi} \int_{-\infty}^\infty d\xi \, e^{-i\xi x} e^{-\xi^{2n}t}, \qquad x \in \mathbb{R}, t>0.
\label{pari}
\end{equation}
The function \eqref{pari} has been investigated in special cases by Hochberg \cite{hochdebbi}, Krylov \cite{krylov}, Nishioka \cite{nishio} and more in general by Lachal \cite{lachal2003,lachal2008}. The sign-varying structure of \eqref{pari} has been discovered in special cases by Bernstein \cite{berna}, L\'evy \cite{levy}, Polya \cite{polia}, as early as at the beginning of the Twentieth century and has been more recently studied also by Li and Wong \cite{liwong}.

The Fourier series of \eqref{wrappingdistr} has the remarkably simple form
\begin{equation}
v_{2n} (\theta, t) \, = \, \frac{1}{2\pi} + \frac{1}{\pi} \sum_{k=1}^\infty e^{-k^{2n}t} \cos k\theta, \qquad \theta \in \left[ 0,2\pi \right).
\label{1.13}
\end{equation}
For $n=1$ we obtain the Fourier series of the law of the circular Brownian motion (see \cite{normal}). The function
\begin{equation}
v_2 (\theta, t) \, = \, \frac{1}{2\pi} + \frac{1}{\pi} \sum_{k=1}^\infty e^{-k^2 t} \cos k\theta 
\label{browniandens}
\end{equation}
is similar to the Von Mises circular normal
\begin{equation}
\mathpzc{v}(\theta, k) \, = \, \frac{e^{k\cos \theta}}{2\pi I_0 (k)} \, = \, \frac{1}{2\pi} \left( 1+2\sum_{m=1}^\infty \frac{I_m(k)}{I_0(k)} \, \cos m \theta \right), \qquad \theta \in \left[ 0,2\pi \right),
\label{vonmis}
\end{equation}
where
\begin{equation}
I_m(x) \, = \, \sum_{j=0}^\infty \left( \frac{x}{2} \right)^{2j+m} \frac{1}{k! \, \Gamma \left( m+j+1 \right)}
\end{equation}
is the $m$-th order Bessel function.
The relationship between \eqref{browniandens} and \eqref{vonmis} is investigated in the paper by Hartman and Watson \cite{normal}. The Von Mises circular normal represents the hitting distribution of the circumference $\mathpzc{R}$ of a Brownian motion with drift starting from the center of $\mathpzc{R}$. The planar Brownian motion $\left( R(t), \, \Psi (t) \right)$, $t>0$, with drift $\bm{k} = \left( k_1, k_2 \right)$, $\left\| \bm{k} \right\| = k$, has transition function
\begin{equation}
\Pr \left\lbrace R(t) \in d\rho, \, \Psi (t) \in d\varphi \right\rbrace \, = \, \frac{\rho}{2\pi t} e^{-\frac{\rho^2}{2t}} e^{-\frac{k^2t}{2}} e^{\rho k \cos \varphi} d\rho d\varphi
\end{equation}
and marginal
\begin{equation}
\Pr \left\lbrace R(t) \in d\rho \right\rbrace \, = \, \frac{\rho}{t} e^{-\frac{\rho^2}{2t}} e^{-\frac{k^2t}{2}}   I_0 \left( \rho k \right) d\rho.
\end{equation}
Therefore
\begin{equation}
\Pr \left\lbrace  \Psi (t) \in d\varphi \big| R(t) \in d\rho \right\rbrace \, = \, \frac{e^{\rho k \cos \varphi}}{2\pi I_0 \left( \rho k \right)} \, d\varphi
\end{equation}
and for $\rho = 1$ coincides with \eqref{vonmis}.

The analysis of the pictures of $v_{2n} (\theta, t)$ for different values of $t$ and different values of the order $2n$, $n \in \mathbb{N}$, shows that the distributions \eqref{1.13} after a certain time become non-negative. This means that pseudoprocesses on the circle $\mathpzc{R}$ behave differently from their counterparts on the line and rapidly become genuine random variables. Furthermore we remark that in small initial intervals of time the circular pseudoprocesses have signed-valued distributions with a number of minima which rapidly unify into a single minimum (located at $\theta = \pi$) which for increasing $t$ upcrosses the zero level. This is due to the fact that in a small initial interval of time the effect produced by the central bell of the distribution has not yet spread on the whole ring $\mathpzc{R}$. 

The value of the absolute minimum of $v_{2n}(\theta, t)$ for $t > \bar{t}$ has the form
\begin{equation}
v_{2n} (\pi, t) \, = \, \frac{1}{2\pi} + \frac{1}{\pi} \sum_{k=1}^\infty (-1)^k e^{-k^{2n}t}, \qquad t > \bar{t}.
\label{1.16}
\end{equation}
The graph of functions $v_{2n}(\theta, t)$ slightly differ from that of the density of circular Brownian motion as shown in figures \ref{figura1} and \ref{figura2}. The term $k=1$ in \eqref{1.13} is the leading term of the series and the form of the distribution $v_{2n}(\theta, t)$ is very close to that of $\frac{1}{2\pi} + \frac{1}{\pi} e^{-t} \cos \theta$.

\begin{figure} [htp!]
	\centering
		\caption{The distributions of the fourth-order circular pseudoprocess for different values of $t$}
		\includegraphics[scale=0.23]{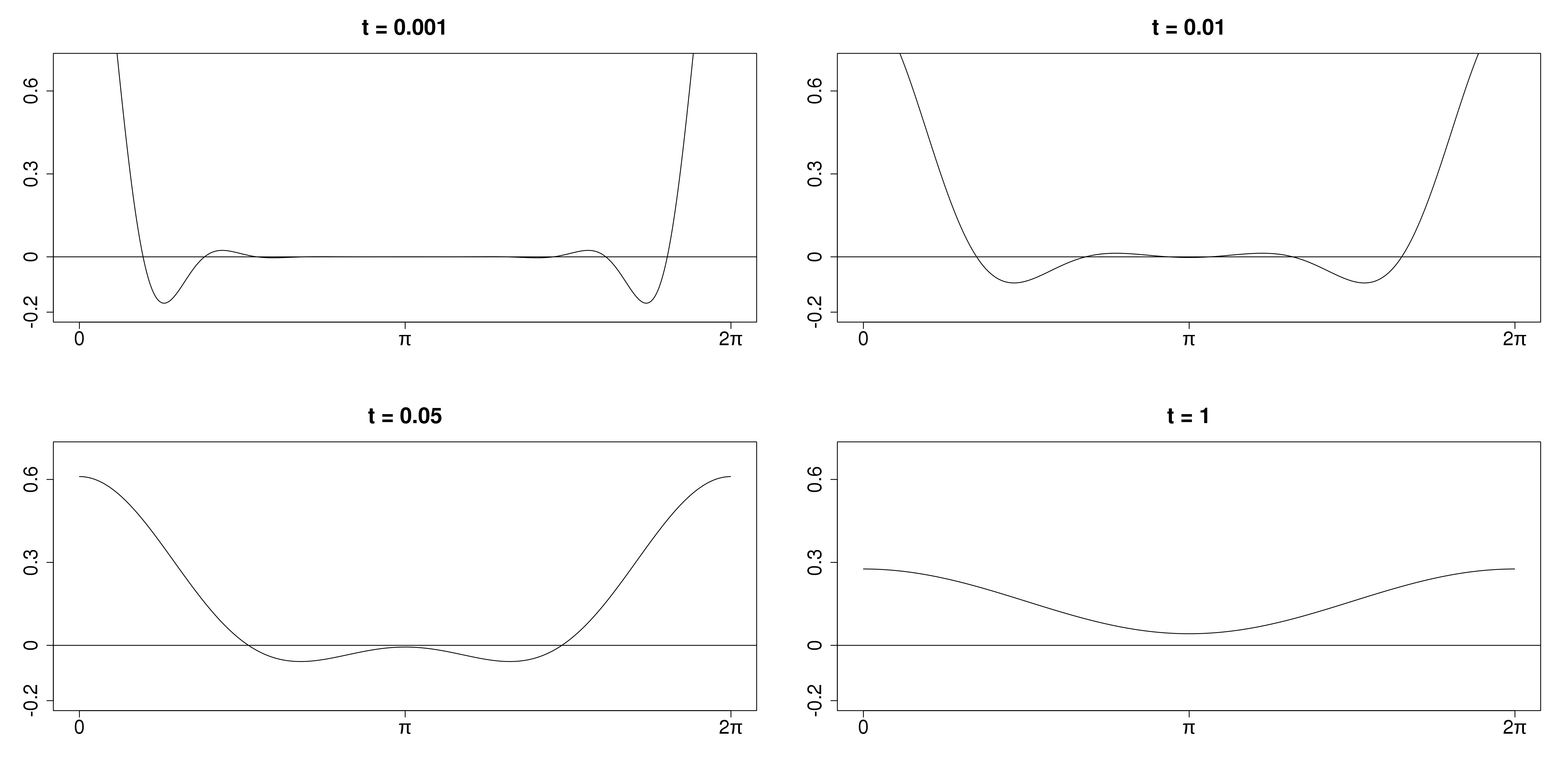} 
		\label{figura1}
\end{figure}
\begin{figure} [htp!]
	\centering
		\caption{The distributions (for $t=1$) of the circular pseudoprocesses of various order $2n$}
		\includegraphics[scale=0.23]{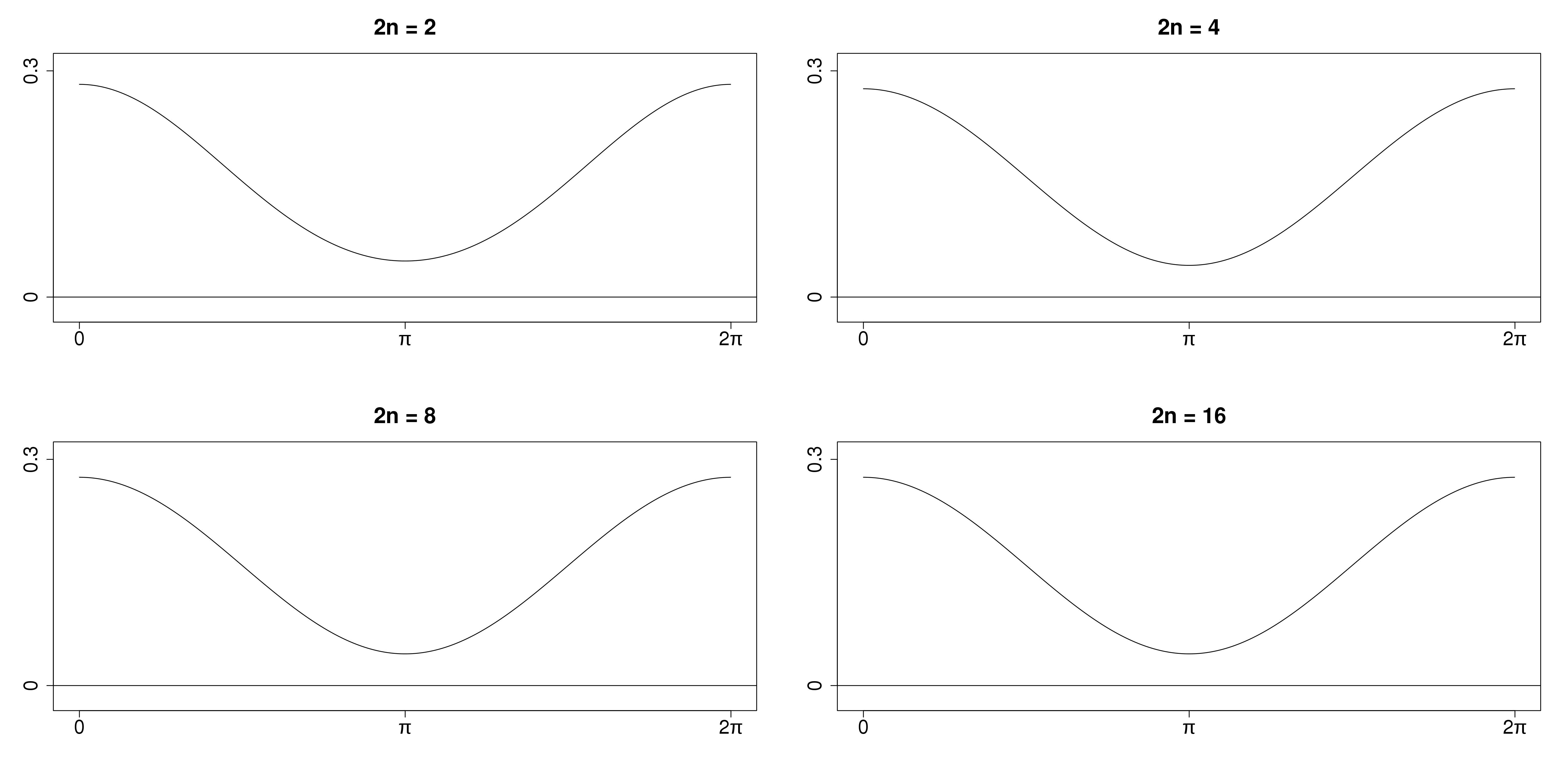} 
		\label{figura2}
\end{figure}

The odd-order case is much more complicated because the solutions to equation \eqref{1.1} are asymmetric (with asymmetry decreasing for increasing values of the order $n$). Some properties of solutions to odd-order heat-type equations can be found in \cite{lachal2003,lachal2008}. In the present paper the wrapped up solution to \eqref{1.1} gives the fundamental solution of \eqref{problbound} as
\begin{equation}
v_{2n+1} (\theta, t) \, = \, \sum_{k=-\infty}^\infty u_{2n+1} (\theta + 2k\pi, t), \qquad \theta \in \left[ 0,2\pi \right),
\end{equation}
whose Fourier series reads
\begin{align}
v_{2n+1} (\theta, t) \, = \, \frac{1}{2\pi} + \frac{1}{\pi} \sum_{k=1}^\infty \cos (k^{2n+1} t + k \theta).
\label{seriedisp}
\end{align}
We note that for $n=1$ the series \eqref{seriedisp} becomes a discrete version of the solution to \eqref{1.1} which reads
\begin{equation}
u_3 (x, t) \, = \, \frac{1}{\sqrt[3]{3t}} \, \textrm{Ai} \left( \frac{x}{\sqrt[3]{3t}} \right)
\end{equation}
where
\begin{equation}
\textrm{Ai} (x) \, = \, \frac{1}{\pi} \int_0^\infty \cos \left( \alpha x + \frac{\alpha^3}{3} \right) \, d\alpha
\end{equation}
is the Airy function. The probabilistic representations of solutions of higher-order heat-type equations show that for increasing values of $n$ the solutions $u_{2n+1}(x, t)$ and $u_{2n}(x, t)$ slightly differ. Therefore the corresponding circular version $v_{2n+1} (\theta, t)$ must have Fourier transform which converge since $v_{2n} (\theta, t)$ do.

We consider also the wrapped up stable processes $\mathfrak{S}^{2\beta} (t)$, $t>0$, and the related governing space-fractional equation.In particular we show that the law of $\mathfrak{S}^{2\beta} ( 2^{-\beta} t)$, $t>0$, is the fundamental solution of the space-fractional equations
\begin{align}
\begin{cases}
\frac{\partial}{\partial t} v_{2}^{\beta} (\theta, t) \, = \, - \left( - \frac{1}{2} \frac{\partial^2}{\partial \theta^2} \right)^\beta v_2^\beta (\theta, t), \qquad \theta \in \left[ 0,2\pi \right), \, t>0, \beta \in \left( 0,1 \right], \\
v_2^\beta (\theta, 0) \, = \, \delta (\theta),
\end{cases}
\label{problfraccalorebound}
\end{align}
and has Fourier expansion
\begin{equation}
v_2^\beta (\theta, t) \, = \, \frac{1}{2\pi} \left[ 1+2\sum_{m=1}^\infty e^{-\left( \frac{m^2}{2} \right)^\beta t}  \cos m \theta \right].
\label{calorebound}
\end{equation}
The fractional operator appearing in \eqref{problfraccalorebound} is the one-dimensional fractional Laplacian which can be defined by means of the Bochner representation (see, for example, Balakrishnan \cite{balakrishnan}, Bochner \cite{bochner})
\begin{equation}
-\left( -\frac{1}{2} \frac{\partial^2}{\partial \theta^2} \right)^\beta \, = \, \frac{\sin \pi \beta}{\pi} \int_0^\infty \left( \lambda + \left( -\frac{1}{2} \frac{\partial^2}{\partial \theta^2} \right) \right)^{-1} \lambda^\beta \, d\lambda, \qquad \beta \in (0,1).
\label{laplfraz}
\end{equation}
We show that formula \eqref{calorebound} coincides with the distribution of the subordinated Brownian motion on the circle, $\mathfrak{B} \left(  H^\beta (t) \right)$, $t>0$, where $H^\beta (t)$, $t>0$, is a stable subordinator of order $\beta \in \left( 0,1 \right]$ (see, for example, Baeumer and Meerschaert \cite{baem}). Furthermore we notice that
\begin{equation}
\mathfrak{B} \left( 2 \, H^\beta (t) \right) \, \stackrel{\textrm{law}}{=} \, \mathfrak{S}^{2\beta} (t), \qquad t>0,
\end{equation}
$\mathfrak{S}^{2\beta} (t)$, $t>0$, is a symmetric process on the ring $\mathpzc{R}$ with distribution which can be obtained by its symmetric stable counterpart on the line as
\begin{align}
p_{\mathfrak{S}^{2\beta}} (\theta, t) \, = \, & \sum_{m=\-\infty}^\infty \frac{1}{2\pi} \int_{-\infty}^\infty d\xi \, e^{-i \xi \left( \theta + 2m\pi \right)} e^{-t |\xi |^{2\beta}} \notag \\
 = \, & \frac{1}{2\pi} \left[ 1+2\sum_{k=1}^\infty e^{-k^{2\beta} t}  \cos k \theta \right].
\end{align}
For $\beta = \frac{1}{2}$ we extract from \eqref{calorebound} the Poisson kernel
\begin{equation}
v_{2}^1 (\theta, t) \, = \, \frac{1}{2\pi} \frac{1-e^{-t\sqrt{2}}}{1+e^{-t\sqrt{2}}-2e^{-\frac{t}{\sqrt{2}}} \cos \theta}.
\end{equation}

The composition of the circular pseudoprocesses $\Theta_n(t)$, $t>0$, with positively-skewed stable processes of order $\frac{1}{n}$, say $H^{\frac{1}{n}} (t)$, $t>0$, leads also to the Poisson kernel. In particular, we show that
\begin{equation}
\Pr \left\lbrace \Theta_{2n} \left( H^{\frac{1}{2n}} (t) \right) \in d\theta \right\rbrace \, = \, \frac{d\theta}{2\pi} \frac{1-e^{-2t}}{1+e^{-2t}-2e^{-t}\cos \theta}, \qquad \theta \in \left[ 0,2\pi  \right).
\label{kernelpari}
\end{equation}
In the odd-order case the result is different, depends on $n$ and has the following form for $\theta \in \left[ 0,2\pi \right)$
\begin{align}
&\Pr \left\lbrace \Theta_{2n+1} \left( H^{\frac{1}{2n+1}} (t) \right) \in d\theta \right\rbrace \notag \\
  = \, &  \frac{d\theta}{2\pi} \frac{1-e^{-2t\cos \frac{\pi}{2(2n+1)}}}{1+e^{-2t\cos \frac{\pi}{2(2n+1)}}-2e^{-t\cos\frac{\pi}{2(2n+1)}}\cos \left( \theta + t \sin \frac{\pi}{2(2n+1)} \right)}.
\label{kerneldisp}
\end{align}
The composition of pseudoprocesses with stable processes therefore produces genuine r.v.'s on the ring $\mathpzc{R}$ as it happens on the line (see Orsingher and D'Ovidio \cite{ecporsdov}). We note that the distribution of the composition in the even order case is independent from $n$ (formula \eqref{kernelpari}), while in the odd-order case the Poisson kernel obtained depends on $n$ and has a rather complicated structure. For $n \to \infty$ the kernel \eqref{kerneldisp} converges pointwise to \eqref{kernelpari} since the asymmetry of the fundamental solutions of \eqref{1.1} (as well as that of their wrapped up counterparts) decreases. 
The result \eqref{kernelpari} offers an interesting interpretation. The Poisson kernel \eqref{kernelpari} can be viewed as the probability that a planar Brownian motion starting from the point with polar coordinates $\left( e^{-t}, 0 \right)$ hits the circumference $\mathpzc{R}$ in the point $ \left( 1, \Theta \right)$ (see Fig. \ref{kernparifig}). Therefore this distribution coincides with the law of an even-order pseudoprocess running on the circumference and stopped at time $H^{\frac{1}{2n}} (t)$, $t>0$. This result is independent from $n$ and therefore is valid also for Brownian motion. A similar interpretation holds also for circular odd-order pseudoprocesses taken at the time $H^{\frac{1}{2n+1}} (t)$, $t>0$, but starting from the point with polar coordinates $\left( e^{-a_n t}, b_n t \right)$, where $a_n = \cos \pi / (2(2n+1))$ and $b_n = \sin \pi / (2(2n+1))$.

\section{Pseudoprocesses on a ring}
In this section we consider pseudoprocesses $\Theta (t)$, $t>0$, on the unit-radius circumference $\mathpzc{R}$, whose density function $v_n (\theta, t)$, $\theta \in \left[ 0,2\pi \right)$, $t>0$, is governed by the higher order heat-type equation
\begin{equation}
\begin{cases}
\frac{\partial}{\partial t} v_{n}(\theta, t) \, = \, c_n \frac{\partial^n}{\partial \theta^n} v_n(\theta, t), \qquad  \theta \in \left[ 0,2\pi \right), \, t >0, \, n \geq 2, \\
v_{n} (\theta, 0) \, = \, \delta (\theta).
\end{cases}
\label{2.1}
\end{equation}
The pseudoprocesses $\Theta_n$ have sample paths obtained by wrapping up the trajectories of pseudoprocesses on the line $\mathbb{R}$. Increasing sample paths on $\mathpzc{R}$ correspond to counterclockwise moving motions on the ring $\mathpzc{R}$.
The structure of sample paths of pseudoprocesses has not been investigated in detail although some results by Lachal (Theorem 5.2, \cite{lachal2008}) show that there is a sort of "slight" discontinuity in their behaviour (this is confirmed by Hochberg \cite{hochdebbi}) and the fact that the reflection principle fails (Beghin et al. \cite{ragoz}, Lachal \cite{lachal2003}).

 It must be considered that the wrapping up of the sample paths and of the corresponding density measures produces in the long run genuine random variables (with non-negative measure densities in the case $n$ is even). Our first result concerns the distribution of $\Theta_n(t)$, $t>0$.
\begin{te}
\label{teoremaleggipseudo}
The solutions to the even-order heat-type equations \eqref{2.1} reads
\begin{equation}
v_{2n} (\theta, t) \, = \, \frac{1}{2\pi} + \frac{1}{\pi} \sum_{k=1}^\infty e^{-k^{2n}t} \cos k\theta, \qquad \textrm{for } c_{2n}  = (-1)^{n+1}, \,  n \geq 1, \\
\label{leggipseudoteo}
\end{equation}
\end{te}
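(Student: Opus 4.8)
The plan is to solve the boundary-value problem \eqref{2.1} by separating variables on the circle: since $v_{2n}(\cdot,t)$ must be $2\pi$-periodic in $\theta$, I would look for it as a Fourier series
\begin{equation}
v_{2n}(\theta,t) \, = \, \sum_{k=-\infty}^\infty c_k(t)\, e^{ik\theta},
\end{equation}
and substitute into $\partial_t v_{2n} = c_{2n}\partial_\theta^{2n}v_{2n}$. Using $\partial_\theta^{2n} e^{ik\theta} = (ik)^{2n}e^{ik\theta} = (-1)^n k^{2n}e^{ik\theta}$ together with the even-order prescription $c_{2n} = (-1)^{n+1}$, the amplification factor collapses to $c_{2n}(ik)^{2n} = (-1)^{2n+1}k^{2n} = -k^{2n}$, so each coefficient obeys the decoupled ODE $c_k'(t) = -k^{2n}c_k(t)$, whence $c_k(t) = c_k(0)\,e^{-k^{2n}t}$. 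The initial condition $v_{2n}(\theta,0)=\delta(\theta)$ forces $c_k(0) = \frac{1}{2\pi}$ for every $k\in\mathbb{Z}$ (the Fourier coefficients of the periodic Dirac delta), and pairing the $+k$ and $-k$ terms gives exactly \eqref{leggipseudoteo}.

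An equivalent route, which ties the statement directly to the wrapping relation \eqref{wrappingdistr}, is to start from $v_{2n}(\theta,t) = \sum_{m\in\mathbb{Z}} u_{2n}(\theta+2m\pi,t)$ and apply the Poisson summation formula. For $t>0$ the Fourier transform of $u_{2n}(\cdot,t)$, namely $\xi\mapsto e^{-\xi^{2n}t}$ (see \eqref{pari}), is a Schwartz function of $\xi$; hence $u_{2n}(\cdot,t)$ is itself a Schwartz function, Poisson summation is legitimate, and it yields $\sum_m u_{2n}(\theta+2m\pi,t) = \frac{1}{2\pi}\sum_{k\in\mathbb{Z}} e^{-k^{2n}t}e^{ik\theta}$, i.e. \eqref{leggipseudoteo} once more.

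For either argument the remaining work is to check the analytic fine print. For $t>0$ the weights $e^{-k^{2n}t}$ decay faster than any power of $k$, so the series in \eqref{leggipseudoteo} and all its $\theta$- and $t$-derivatives converge absolutely and uniformly on $[0,2\pi)\times[\varepsilon,\infty)$ for any $\varepsilon>0$; this legitimizes the term-by-term differentiation used above and shows that $v_{2n}$ is $C^\infty$ for $t>0$ and solves the equation classically. One must also verify that the initial datum is attained in the weak sense, i.e. $\int_0^{2\pi} v_{2n}(\theta,t)\,\varphi(\theta)\,d\theta \to \varphi(0)$ as $t\to 0^+$ for every smooth $2\pi$-periodic test function $\varphi$; this follows by Parseval from $c_k(t)\to c_k(0)$ together with the rapid decay of the Fourier coefficients of $\varphi$, which allows the limit to be passed inside the sum by dominated convergence. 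I do not expect a genuine obstacle here: the one point that really uses the even-order hypothesis is that $c_{2n}(ik)^{2n} = -k^{2n}\le 0$, so the series is of ``heat-type'' (smoothing, absolutely convergent) — by contrast, in the odd-order case $c_n(ik)^n$ is purely imaginary and the analogous series \eqref{seriedisp} converges only conditionally, which is exactly why that case is excluded in this theorem.
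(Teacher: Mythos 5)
Your proposal is correct and follows essentially the same two routes as the paper: your Fourier-mode ODE argument is the paper's separation-of-variables derivation in a cleaner form, and your Poisson-summation argument is a rigorous version of the paper's computation of the Fourier coefficients of the wrapped-up solution $\sum_m u_{2n}(\theta+2m\pi,t)$ via the delta-function identity. The only difference is that you make explicit the analytic fine print (Schwartz decay justifying Poisson summation, absolute convergence allowing term-by-term differentiation, and weak attainment of the initial datum), which the paper treats formally.
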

\begin{proof}
We can obtain the result \eqref{leggipseudoteo} in two different ways. We start by considering the even-order case where the wrapping up of the solutions to \eqref{1.1} which leads to
\begin{align}
v_{2n} (\theta, t) \, = \, \sum_{m=-\infty}^\infty u_{2n} (\theta + 2m\pi, t) \, = \, \sum_{m=-\infty}^\infty \int_{-\infty}^\infty d\xi \, e^{-i \left( \theta + 2m\pi \right) \xi} e^{-\xi^{2n}t}.
\end{align}
The Fourier series expansion of the symmetric function $v_{2n}(\theta, t)$ has coefficients
\begin{align}
a_k \, = \, & \frac{1}{\pi} \int_0^{2\pi} d\theta \, \cos k\theta \left[ \sum_{m=-\infty}^\infty u_{2n} (\theta + 2m\pi, t) \right] \notag \\
= \, & 2 \sum_{m=-\infty}^\infty \int_{m}^{m+1} dy  \; u_{2n} (2\pi y, t) \, \cos 2\pi k y \notag \\
= \, & \frac{1}{\pi} \int_{-\infty}^\infty dz \, \cos kz \, \left( \frac{1}{2\pi} \int_{-\infty}^\infty d\xi \, e^{-i\xi z} e^{-\xi^{2n}t} \right) \notag \\
= \, & \frac{1}{2\pi} \int_{-\infty}^\infty d\xi \, e^{-\xi^{2n}t} \left[ \frac{1}{2\pi} \int_{-\infty}^\infty dz \left( e^{iz (k-\xi )} + e^{-iz (k+\xi)} \right)  \right] \notag \\
= \, & \frac{1}{2\pi} \int_{-\infty}^\infty d\xi \, e^{-\xi^{2n}t} \left[ \delta (\xi -k) + \delta (\xi + k) \right] \, = \, \frac{e^{-k^{2n}t}}{\pi}.
\end{align}
An alternative derivation of $v_{2n} (\theta, t)$ is based on the method of separation of variables. Thus under the assumption that $v_{2n} (\theta, t) = T(t) \psi (\theta)$ we get
\begin{equation}
\frac{T^{(1)} (t)}{T(t)} \, = \, \frac{\psi^{(2n)} (\theta)}{\psi (\theta)} (-1)^{n+1} \, = \, -\beta^{2n}.
\end{equation}
In order to have periodic solutions we must take integer values of $\beta$ and thus the general solution to \eqref{2.1} becomes
\begin{equation}
v_{2n} (\theta, t) \, = \, \sum_{k=-\infty}^\infty A_k \, e^{-k^{2n}t} \cos k\theta \, = \, A_0 + 2\sum_{k=1}^\infty A_k \, e^{-k^{2n}t} \cos k\theta.
\end{equation}
The initial condition
\begin{equation}
v_{2n} (\theta, 0) \, = \, \delta (\theta) \, = \, \frac{1}{2\pi} + \frac{1}{\pi} \sum_{k=1}^\infty \cos k\theta
\end{equation}
implies that $A_k = \frac{1}{2\pi}$, which confirms the result.
\qed
\end{proof}
\begin{pr}
We are able to give a third derivation of \eqref{leggipseudoteo} by resorting to the probabilistic representation of fundamental solutions to even-order heat-type equations of Orsingher and D'Ovidio \cite{ecporsdov} which reads
\begin{align}
u_{2n} (x, t) \, = \, \frac{1}{\pi x} \mathbb{E} \left\lbrace \sin \left( x G^{2n} \left( \frac{1}{t} \right) \right) \right\rbrace
\label{2288}
\end{align}
for $c_n = (-1)^{n+1}$, $n \geq 1$. In \eqref{2288} $G^\gamma (t^{-1})$ is a generalized gamma r.v. with density
\begin{equation}
g^\gamma (x, t) \, = \, \gamma \frac{x^{\gamma-1}}{t} e^{-\frac{x^\gamma}{t}}, \qquad x>0, \, t>0, \, \gamma >0.
\label{densitagamma}
\end{equation}
\end{pr}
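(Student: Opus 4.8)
The plan is to substitute the probabilistic representation \eqref{2288} into the wrapping-up identity \eqref{wrappingdistr} and to recognize the resulting series in $m$ as a randomly dilated periodization of the cardinal sine. Writing $G = G^{2n}(1/t)$, which by \eqref{densitagamma} has density $g^{2n}(x,1/t) = 2n\, t\, x^{2n-1} e^{-t x^{2n}}$ on $(0,\infty)$, formula \eqref{2288} gives
\begin{equation}
v_{2n}(\theta, t) \, = \, \sum_{m=-\infty}^\infty u_{2n}(\theta + 2m\pi, t) \, = \, \mathbb{E}\left\lbrace \frac{1}{\pi}\sum_{m=-\infty}^\infty \frac{\sin\big((\theta + 2m\pi)\,G\big)}{\theta + 2m\pi} \right\rbrace ,
\label{planstep1}
\end{equation}
the $m=0$ term at $\theta = 0$ being understood by continuity as $G/\pi$. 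The exchange of $\mathbb{E}$ and $\sum_m$ needs a little care, since the periodized cardinal sine is only conditionally summable; I would justify it through symmetric partial sums (or, equivalently, by mollifying the kernel $\mathbf 1_{[-w,w]}$ that appears below and letting the mollification tend to the identity), using that the symmetric partial sums are uniformly bounded in $m$ and $\theta$ as long as $G$ stays in a bounded set.

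Next I would evaluate the inner sum by Poisson summation. Since $x \mapsto \sin(wx)/(\pi x)$ is the inverse Fourier transform of the indicator $\mathbf 1_{[-w,w]}$, its $2\pi$-periodization has Fourier coefficients obtained by sampling that indicator at the integers, so that for every $w \notin \mathbb{Z}$
\begin{equation}
\frac{1}{\pi}\sum_{m=-\infty}^\infty \frac{\sin\big((\theta + 2m\pi)\,w\big)}{\theta + 2m\pi} \, = \, \frac{1}{2\pi}\sum_{k=-\infty}^\infty \mathbf 1_{\{|k|<w\}}\, e^{ik\theta} \, = \, \frac{1}{2\pi} + \frac{1}{\pi}\sum_{k=1}^\infty \mathbf 1_{\{k<w\}}\cos k\theta .
\label{planstep2}
\end{equation}
The set $\{w \in \mathbb{Z}\}$ carries no mass under the law of $G$, so inserting $w = G$ into \eqref{planstep2} and taking expectations in \eqref{planstep1} yields
\begin{equation}
v_{2n}(\theta, t) \, = \, \frac{1}{2\pi} + \frac{1}{\pi}\sum_{k=1}^\infty \Pr\{ G > k \}\,\cos k\theta .
\label{planstep3}
\end{equation}

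Finally — and this is the only genuine computation — I would compute the tail of the generalized gamma by the change of variable $y = t x^{2n}$:
\begin{equation}
\Pr\{ G^{2n}(1/t) > k \} \, = \, \int_k^\infty 2n\, t\, x^{2n-1} e^{-t x^{2n}}\, dx \, = \, e^{-t k^{2n}} ,
\end{equation}
and substituting this into \eqref{planstep3} would reproduce exactly \eqref{leggipseudoteo}. The main obstacle is the interchange of expectation and the conditionally convergent series in \eqref{planstep1}; once the periodized cardinal sine is identified with the truncated Dirichlet kernel in \eqref{planstep2}, everything else is immediate. I note in passing that an entirely elementary variant avoids \eqref{2288} altogether: since \eqref{2288} is nothing but an integration by parts in the Fourier integral \eqref{pari}, one could instead wrap up \eqref{pari} and integrate by parts term by term, arriving at the same series — but the representation-theoretic argument above is the one suggested by the statement of the proposition.
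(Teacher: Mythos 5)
Your proof is correct and essentially follows the paper's own route: wrap up the representation \eqref{2288} and identify the Fourier coefficient of the wrapped kernel at frequency $k$ as $\frac{1}{\pi}\Pr\{G^{2n}(1/t)>k\}=\frac{1}{\pi}e^{-k^{2n}t}$, i.e.\ a superposition of truncated Dirichlet kernels with Weibull-tail amplitudes. The only difference is cosmetic: you obtain the coefficients via Poisson summation for the cardinal sine (inverse transform of $\mathbf{1}_{[-w,w]}$), whereas the paper computes them directly through the Dirichlet integral $\int_0^\infty \frac{\sin\alpha z}{z}\,dz=\pm\frac{\pi}{2}$, which amounts to the same identity.
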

\begin{proof}
We start the proof by wrapping-up the representation \eqref{2288} as follows
\begin{align}
v_{2n} (\theta, t) \, = \, \sum_{m=-\infty}^\infty \frac{1}{\pi (\theta + 2m\pi)} \mathbb{E} \left\lbrace \sin \left( (\theta + 2m\pi) G^{2n} \left( \frac{1}{t} \right) \right) \right\rbrace.
\label{2299}
\end{align}
Now we evaluate the Fourier coefficients of \eqref{2299} as
\begin{align}
a_k \, = \, & \frac{1}{\pi^2} \sum_{m=-\infty}^\infty \int_0^{2\pi} \frac{d\theta}{\theta + 2m\pi} \mathbb{E} \left\lbrace \sin (\theta + 2m\pi ) G^{2n} \left( \frac{1}{t} \right) \right\rbrace \, \cos k\theta \notag \\
= \, & \frac{1}{\pi^2} \mathbb{E} \left\lbrace \int_{-\infty}^\infty dz \frac{\cos kz}{z} \sin \left( z \, G^{2n} \left( \frac{1}{t} \right) \right) \right\rbrace \notag \\
= \, & \frac{1}{\pi^2} \mathbb{E} \left\lbrace \int_0^\infty dz \, \left[ \frac{\sin \left( kz + z \, G^{2n} \left( \frac{1}{t} \right) \right)}{z} + \frac{\sin \left( z \, G^{2n} \left( \frac{1}{t} \right) -kz \right)}{z} \right] \right\rbrace \notag \\
= \, &\frac{1}{\pi} \mathbb{E} \left\lbrace \mathbb{I}_{ \left[-G^{2n} \left( \frac{1}{t} \right) <  k < G^{2n} \left( \frac{1}{t} \right) \right]} \right\rbrace \, = \, \frac{1}{\pi} \Pr \left\lbrace G^{2n} \left( \frac{1}{t} \right) > k \right\rbrace \, = \, \frac{1}{\pi} e^{-k^{2n}t},
\label{215}
\end{align}
where we used the fact that
\begin{equation}
\int_0^\infty dx \, \frac{\sin \alpha x}{x} \, = \, 
\begin{cases}
\frac{\pi}{2}, \qquad & \textrm{if } \, \alpha > 0, \\
-\frac{\pi}{2}, & \textrm{if } \, \alpha < 0.
\end{cases}
\end{equation}
\qed
\end{proof}
The calculation \eqref{215} shows that the density of even-order circular pseudoprocesses can be viewed as the superposition of sinusoidal waves whose amplitude corresponds to the tails of a Weibull distribution.

For the odd-order pseudoprocess we proceed formally as in the even-order case and the Fourier cofficients of
\begin{equation}
v_{2n+1} (\theta, t) \, = \, \sum_{m=-\infty}^\infty u_{2n+1} (\theta + 2m\pi, t)
\end{equation}
become
\begin{align}
a_k \, = \, & \frac{1}{2\pi} \int_{-\infty}^\infty d\xi \, e^{(-1)^n(-i\xi)^{2n+1}t} \left[ \frac{1}{2\pi} \int_{-\infty}^\infty dz \, \left( e^{i(k-\xi)z} + e^{-i (k+\xi)z}  \right) \right] \notag \\
= \, & \frac{1}{2\pi} \int_{-\infty}^\infty d\xi \, e^{-i\xi^{2n+1}t} \left[ \delta (\xi -k) + \delta (\xi + k) \right] \notag \\
= \, & \frac{1}{2\pi} \left[ e^{-ik^{2n+1}} + e^{ik^{2n+1}} \right] \, = \, \frac{1}{\pi} \cos k^{2n+1} t.
\end{align}
In a similar way we have that
\begin{align}
b_k \, = \, & \frac{1}{2\pi} \int_{-\infty}^\infty d\xi \, e^{(-1)^n (-i\xi)^{2n+1}t} \left[ \frac{1}{2\pi i} \int_{-\infty}^\infty dz \, \left( e^{i(k-\xi)z} - e^{-i(k+\xi)z} \right) \right] \notag \\
= \, & \frac{1}{2\pi i} \int_{-\infty}^\infty d\xi \, e^{-i\xi^{2n+1}t} \left[ \delta (\xi - k) - \delta (\xi + k) \right] \, = \, -\frac{1}{\pi} \sin k^{2n+1}t,
\end{align}
and thus the expression of the distribution of the odd-order pseudoprocess on the circle $v_{2n+1} (\theta, t)$ becomes
\begin{align}
v_{2n+1}(\theta, t) \, = \, \frac{1}{2\pi} + \frac{1}{\pi} \sum_{k=1}^\infty \cos \left( k^{2n+1} t + k \theta \right).
\label{incriminata}
\end{align}
For $n=1$ the series \eqref{incriminata} is similar to the integral representation of the Airy function
\begin{align}
Ai(x) \, = \, \frac{1}{\pi} \int_0^\infty \cos \left( \alpha x + \frac{\alpha^3}{3} \right) \, d\alpha.
\end{align}
We are not able to give a rigorous proof of the convergence of the series \eqref{incriminata} but we are able to give an alternative derivation as follows. In particular we can obtain the expansion \eqref{incriminata} for circular odd-order pseudoprocesses by resorting again to the probabilistic representation of the law of pseudoprocesses of \cite{ecporsdov} which reads
\begin{equation}
u_{2n+1} (x, t) \, = \, \frac{1}{\pi x} \mathbb{E} \left\lbrace e^{-b_n x \, G^{2n+1} \left( \frac{1}{t} \right)} \sin \left( a_n x \, G^{2n+1} \left( \frac{1}{t} \right) \right) \right\rbrace,
\label{rapprprob}
\end{equation}
where $c_{2n+1} = (-1)^n$,  $n \geq 1$, $G^\gamma \left( t^{-1} \right)$ is a generalized gamma r.v. with density \eqref{densitagamma} and
\begin{equation}
a_n \, = \, \cos \frac{\pi}{2(2n+1)}, \qquad b_n \, = \, \sin \frac{\pi}{2(2n+1)}.
\end{equation}
By wrapping-up \eqref{rapprprob} we obtain
\begin{align}
&v_{2n+1} (\theta, t) =  \notag \\
= \, & \sum_{m=-\infty}^\infty \frac{1}{\pi (\theta + 2m\pi)} \mathbb{E} \left\lbrace e^{-b_n (\theta + 2m\pi) G^{2n+1} \left( \frac{1}{t} \right)} \sin \left( a_n (\theta + 2m\pi) G^{2n+1} \left( \frac{1}{t} \right) \right) \right\rbrace.
\label{214}
\end{align}
We prove that the Fourier series expansion of \eqref{214} coincides with \eqref{incriminata}.We need both the sine and cosine coefficients of the Fourier expansion because the signed laws are asymmetric. The Fourier coefficients become
\begin{equation}
\begin{cases}
a_k \, = \, \frac{1}{\pi} \cos k^{2n+1} t, \\
b_k \, = \, - \frac{1}{\pi} \sin k^{2n+1} t.
\end{cases}
\label{coeffcasodispari}
\end{equation}
We give with some details the evaluation of \eqref{coeffcasodispari}
\begin{align}
a_k = & \frac{1}{\pi^2} \sum_{m=-\infty}^\infty \int_0^{2\pi} d\theta \frac{\cos k\theta}{\theta + 2m\pi} \mathbb{E} \left\lbrace e^{-b_n (\theta + 2m\pi) G^{2n+1} \left( \frac{1}{t} \right)} \sin \left( a_n (\theta + 2m\pi) G^{2n+1} \left( \frac{1}{t} \right) \right)  \right\rbrace \notag \\
= \, & \frac{1}{\pi^2} \mathbb{E} \left\lbrace \int_{-\infty}^\infty dz \frac{\cos kz}{z} e^{-b_n z G^{2n+1} \left( \frac{1}{t} \right)} \sin \left(a_n zG^{2n+1} \left( \frac{1}{t} \right) \right) \right\rbrace \notag \\
= \, &\frac{1}{2\pi^2} \mathbb{E} \left\lbrace \int_{-\infty}^\infty dz \frac{ \sin \left( z \left( a_n G^{2n+1} \left( \frac{1}{t} \right) + k \right) \right) + \sin \left( z \left( a_n G^{2n+1} \left( \frac{1}{t} \right) -k \right) \right)}{z}  e^{-b_n z G^{2n+1} \left( \frac{1}{t} \right)} \right\rbrace \notag \\
= \, & \frac{1}{2i2\pi^2} \mathbb{E} \bigg\lbrace \int_{-\infty}^\infty \frac{dz}{z} \left[ e^{iz \left( a_n G^{2n+1} \left( \frac{1}{t} \right) +k \right)-b_n z G^{2n+1} \left( \frac{1}{t} \right)} - e^{-iz \left( a_n G^{2n+1} \left( \frac{1}{t} \right) + k \right)-b_n z G^{2n+1} \left( \frac{1}{t} \right)}  \right. \notag \\
& \left.  +e^{iz \left( a_n G^{2n+1} \left( \frac{1}{t} \right) -k \right)-b_n z G^{2n+1} \left( \frac{1}{t} \right)} - e^{-iz \left( a_n G^{2n+1} \left( \frac{1}{t} \right) - k \right)-b_n z G^{2n+1} \left( \frac{1}{t} \right)} \right]  \bigg\rbrace.
\label{aconcappa}
\end{align}
By considering the following integral representation of the Heaviside function
\begin{equation}
\mathcal{H}_y(x) \, = \, - \frac{1}{2\pi} \int_{\mathbb{R}} dw \, e^{-iwx} \frac{e^{iyw}}{iw} \, = \, \int_{\mathbb{R}} dw \, e^{iwx} \frac{e^{-iyw}}{iw}
\end{equation}
the coefficients $a_k$ in \eqref{aconcappa} become
\begin{align}
a_k \, = \, & \frac{(2n+1)t}{2\pi} \int_0^\infty dw \, w^{2n} e^{-t w^{2n+1} } \left[ \mathcal{H}_k (w (a_n-ib_n))  - \mathcal{H}_k (-w(a_n+ib_n)) \right. \notag \\
& \left. + \mathcal{H}_k (w(a_n+ib_n)) - \mathcal{H}_k (-w (a_n - i b_n)) \right] \notag \\
= \, & \frac{i(2n+1)t}{2\pi} \int_0^\infty dw \, w^{2n} e^{-iw^{2n+1}t} \mathcal{H}_k(w) + \frac{i(2n+1)t}{2\pi} \int_0^\infty dw \, w^{2n} e^{iw^{2n+1}t} \mathcal{H}_k (-w) \notag \\
& - \left[ \frac{i(2n+1)t}{2\pi} \int_0^\infty dw \, w^{2n} e^{iw^{2n+1}t} \mathcal{H}_k(w) + \frac{i(2n+1)t}{2\pi} \int_0^\infty dw \, w^{2n} e^{-iw^{2n+1}t} \mathcal{H}_k (-w) \right] \notag \\
= \, & \frac{i(2n+1)t}{2\pi} \left( \int_{-\infty}^\infty dw \, w^{2n} e^{-iw^{2n+1}t} \mathcal{H}_k (w) - \int_{-\infty}^\infty dw \, w^{2n} e^{iw^{2n+1}t} \mathcal{H}_k(w) \right) \notag \\
= \, & \frac{1}{2\pi} \left( e^{ik^{2n+1}t} + e^{-ik^{2n+1}t}  \right) \, = \, \frac{1}{\pi} \cos k^{2n+1}t.
\end{align}
In order to justify the last step we can either take the Laplace transform with respect to $t$ (see for example Orsingher and D'Ovidio \cite{ecporsdov}) or we can apply the following trick
\begin{equation}
a_k \, = \, \lim_{\zeta \to 0} \frac{i(2n+1)t}{2\pi} \left( \int_{k}^\infty dw \, e^{-\zeta w^{2n+1}} w^{2n} e^{-iw^{2n+1}t}  - \int_{k}^\infty dw \, e^{-\zeta w^{2n+1}} w^{2n} e^{iw^{2n+1}t}  \right).
\end{equation}
The coefficients $b_k$ \eqref{coeffcasodispari} can be obtained by performing similar calculation.

\subsection{Circular Brownian motion}
The circular Brownian motion $\mathfrak{B}(t)$, $t>0$, has been analyzed by Roberts and Ursell \cite{ursel}, Stephens \cite{biometrika} and also by Hartman and Watson \cite{normal}. In a certain sense it can be viewed as a special case of symmetric pseudoprocesses on the ring $\mathpzc{R}$. The distribution of $\mathfrak{B}(t)$, $t>0$, has Fourier representation
\begin{equation}
p_{\mathfrak{B}} (\theta, t) \, = \, \frac{1}{2\pi} \left( 1+2 \sum_{k=1}^\infty e^{-\frac{k^2t}{2}} \cos k\theta \right), \qquad \theta \in \left[ 0, 2\pi \right),
\label{leggebring}
\end{equation}
and can be also regarded as the wrapped up distribution of the standard Brownian motion
\begin{equation}
p_{\mathfrak{B}} (\theta, t) \, = \, \frac{1}{\sqrt{2\pi t}} \sum_{m=-\infty}^\infty e^{-\frac{(\theta + 2m\pi)^2}{2t}}.
\end{equation}
Formula \eqref{leggebring} corresponds to $n=1$ of \eqref{leggipseudoteo} for the even-order case with a suitable adjustement of the time scale. The law \eqref{leggebring} can be obtained directly by solving the Cauchy problem
\begin{equation}
\begin{cases}
\frac{\partial}{\partial t} p_{\mathfrak{B}} (\theta, t) \, = \, \frac{1}{2} \frac{\partial^2}{\partial \theta^2} p_{\mathfrak{B}} (\theta, t), \qquad \theta \in \left[ 0,2\pi \right), \, t>0, \\
p_{\mathfrak{B}} (\theta, 0) \, = \, \delta (\theta).
\end{cases}
\end{equation}
or as the limit of a circular random walk as in \cite{biometrika}. The distribution of the circular Brownian motion is depicted in Figure \ref{figura2} and looks like the Von Mises circular normal (this is the inspiring idea of the paper by Hartman and Watson \cite{normal} in which the connection between the two distributions is investigated). For $t \to \infty$ the distribution of $\mathfrak{B}(t)$, $t>0$, tends to the uniform law.

We note that
\begin{equation}
\Pr \left\lbrace -\frac{\pi}{2} < \mathfrak{B}(t) < \frac{\pi}{2} \right\rbrace \, = \, \frac{1}{2} + \frac{2}{\pi} \sum_{k=0}^\infty (-1)^k \, \frac{e^{-\frac{(2k+1)^2t}{2}}}{2k+1}
\end{equation}
and therefore
\begin{equation}
\Pr \left\lbrace -\frac{\pi}{2} < \mathfrak{B}(t) < \frac{\pi}{2} \right\rbrace \, \leq \, \frac{1}{2} + \frac{2}{\pi} e^{-\frac{t}{2}}, \qquad \textrm{valid for } \, t > -2\log \frac{\pi}{4} \, \approx \, 0.209.
\end{equation}

The relationship between circular Brownian motion $\mathfrak{B}(t)$, $t>0$, and Brownian motion on the line $B(t)$, $t>0$, 
\begin{equation}
\left\lbrace \mathfrak{B}(t) \in d\theta \right\rbrace \, = \, \bigcup_{m=-\infty}^\infty \left\lbrace B(t) \in d(\theta + 2m\pi) \right\rbrace, \qquad \theta \in \left[ 0,2\pi \right),
\end{equation}
permits us to derive the distribution of
\begin{equation}
\max_{0 \leq s \leq t} \left| \mathfrak{B} (t) \right|, \qquad t>0,
\label{maxdist}
\end{equation}
that is the distribution of the maximal distance reached by the circular Brownian motion from the starting point.
Of course the sample paths overcoming the angular distance $\pi$ at least once are assigned $\pi$ as maximal distance which therefore has a positive probability (converging to 1 as time tends to infinity).
\begin{pr}
For the maximal distance \eqref{maxdist} we have that
\begin{align}
\Pr \left\lbrace \max_{0 \leq s \leq t} \left| \mathfrak{B}(s) \right| < \theta \right\rbrace \, = \, & \int_{-\theta}^\theta \Pr \left\lbrace -\theta < \min_{0\leq s \leq t} B(s) < \max_{0\leq s \leq t} B(s) < \theta \right\rbrace \notag \\
= \, & \int_{-\theta}^\theta dy \, \left( \sum_{m=-\infty}^\infty \frac{e^{-\frac{(y-4m\theta)^2}{2t}}}{\sqrt{2\pi t}} - \sum_{m=-\infty}^\infty \frac{e^{-\frac{(-y+2\theta (2m-1))^2}{2t}}}{\sqrt{2\pi t}} \right) \notag \\
= \, &\sum_{r=-	\infty}^\infty (-1)^r \int_{-\frac{(1+2r)\theta}{\sqrt{t}}}^{\frac{(1-2r)\theta}{\sqrt{t}}} dw \, \frac{e^{-\frac{w^2}{2}}}{\sqrt{2\pi}}.
\end{align}
The related first passage time of circular Brownian motion has density which has the following form
\begin{align}
\Pr \left\lbrace \mathcal{T}_\theta \in dt \right\rbrace \, = \, & - \frac{d}{dt} \Pr \left\lbrace \max_{0\leq s \leq t} \left| \mathfrak{B} (s) \right| < \theta \right\rbrace dt \notag \\
= \, & \sum_{r=-\infty}^\infty \left[ \frac{(-1)^r \, e^{-\frac{(1-2r)^2\theta^2}{2t}}}{2\sqrt{2\pi t^3}} \theta (1-2r) + \frac{(-1)^r \, e^{-\frac{(1+2r)^2\theta^2}{2t}}}{2\sqrt{2\pi t^3}} \theta (1+2r) \right] \notag \\
= \, & \left( \frac{\theta \, e^{-\frac{\theta^2}{2t}}}{\sqrt{2\pi t^3}} \right) \sum_{r=-\infty}^\infty (-1)^r e^{-\frac{2r^2\theta^2}{r}} \left( \cosh \frac{2r\theta^2}{t} - 2r \sinh \frac{2r \theta^2}{t} \right)
\end{align}
\end{pr}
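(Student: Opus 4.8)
The plan is to translate the circular maximum into a two-barrier problem for the linear Brownian motion $B$ and then read everything off the method of images. Since a trajectory of $\mathfrak{B}$ is a trajectory of $B$ wrapped around $\mathpzc{R}$, for $\theta\in(0,\pi]$ the angular distance from the starting point stays below $\theta$ throughout $[0,t]$ exactly when the path of $B$ never leaves the interval $(-\theta,\theta)$ on $[0,t]$; hence the first step only records
\[
\Pr\!\left\{\max_{0\le s\le t}|\mathfrak{B}(s)|<\theta\right\}
=\Pr\!\left\{-\theta<\min_{0\le s\le t}B(s)\le\max_{0\le s\le t}B(s)<\theta\right\}
=\int_{-\theta}^{\theta}q_t(0,y)\,dy ,
\]
where $q_t(0,\cdot)$ is the transition density of $B$ started at $0$ and killed on first exit from $(-\theta,\theta)$. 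I would then obtain $q_t$ by the classical image construction for the two barriers $\pm\theta$: the images of the source at $0$ sit at $4m\theta$ with sign $+$ and at $2\theta(2m-1)$ with sign $-$, $m\in\mathbb{Z}$, so that $q_t(0,y)$ is exactly the theta-type series appearing in the statement; that this series really is $q_t$ follows because every summand solves the heat equation, the two families cancel on $y=\pm\theta$, and the $t\downarrow0$ limit is the Dirac mass at $0$ inside $(-\theta,\theta)$.

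The second step is to integrate this series over $(-\theta,\theta)$ term by term and substitute $w=y/\sqrt t$ in each integral. The $+$ family gives $\int_{-(1+4m)\theta/\sqrt t}^{(1-4m)\theta/\sqrt t}\frac{e^{-w^2/2}}{\sqrt{2\pi}}\,dw$ and the $-$ family gives $-\int_{(1-4m)\theta/\sqrt t}^{(3-4m)\theta/\sqrt t}\frac{e^{-w^2/2}}{\sqrt{2\pi}}\,dw$; relabelling $r=2m$ in the first (whence $-(1+4m)\theta=-(1+2r)\theta$, $(1-4m)\theta=(1-2r)\theta$, $(-1)^r=+1$) and $r=2m-1$ in the second (whence $(1-4m)\theta=-(4m-1)\theta=-(1+2r)\theta$, $(3-4m)\theta=(1-2r)\theta$, $(-1)^r=-1$) fuses the two families into the single alternating series $\sum_{r}(-1)^r\int_{-(1+2r)\theta/\sqrt t}^{(1-2r)\theta/\sqrt t}\frac{e^{-w^2/2}}{\sqrt{2\pi}}\,dw$, which is the asserted expression.

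For the last assertion I would use that $\mathcal{T}_\theta$ is precisely the first exit time of $B$ from $(-\theta,\theta)$, so that $\Pr\{\mathcal{T}_\theta>t\}$ is the quantity just computed and $\Pr\{\mathcal{T}_\theta\in dt\}=-\tfrac{d}{dt}\Pr\{\mathcal{T}_\theta>t\}\,dt$. Differentiating the alternating series termwise, the $t$-derivative of $\int_{-(1+2r)\theta/\sqrt t}^{(1-2r)\theta/\sqrt t}\frac{e^{-w^2/2}}{\sqrt{2\pi}}\,dw$ equals $-\frac{1}{2\sqrt{2\pi t^3}}\left((1-2r)\theta\,e^{-\frac{(1-2r)^2\theta^2}{2t}}+(1+2r)\theta\,e^{-\frac{(1+2r)^2\theta^2}{2t}}\right)$, which yields the middle line of the displayed density. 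Expanding $(1\mp2r)^2\theta^2=\theta^2\mp4r\theta^2+4r^2\theta^2$, factoring out $\theta e^{-\theta^2/2t}/\sqrt{2\pi t^3}$, and collecting the $\pm r$ exponentials through $(1-2r)e^{2r\theta^2/t}+(1+2r)e^{-2r\theta^2/t}=2\cosh\tfrac{2r\theta^2}{t}-4r\sinh\tfrac{2r\theta^2}{t}$ then produces the closed hyperbolic form of the last line.

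The image bookkeeping and the change of variables are routine; the point that genuinely needs care is the legitimacy of interchanging the sum with the integral over $(-\theta,\theta)$ and, afterwards, with $\tfrac{d}{dt}$. Both rest on the uniform estimates $\sum_m e^{-(y-cm)^2/2t}\le C$ and $\sum_m(1+|m|)e^{-(y-cm)^2/2t}\le C'$ for $y$ in a bounded set and $t$ in a compact subset of $(0,\infty)$: the first makes the theta-series absolutely and uniformly convergent there, and the second does the same for the termwise $t$-derivative, which is exactly what licenses identifying the differentiated series with $-\tfrac{d}{dt}$ of the sum.
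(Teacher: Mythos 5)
Your proposal is correct and follows essentially the same route as the paper: the Proposition is stated there without a separate proof, and its displayed chain of equalities is precisely the two-barrier reflection (method of images) computation you spell out, with the same image positions $4m\theta$ and $2\theta(2m-1)$, the same relabelling $r=2m$ and $r=2m-1$, and the same termwise $t$-differentiation leading to the hyperbolic form. Your closing expression with $e^{-2r^2\theta^2/t}$ also silently corrects the paper's evident typo $e^{-\frac{2r^2\theta^2}{r}}$, and your uniform-convergence remarks supply the interchange justifications the paper leaves implicit.
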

Curiously enough the factor $\theta e^{-\frac{\theta^2}{2t}} / \sqrt{2\pi t^3}$ coincides with the first passage time through $\theta$ of a Brownian motion on the line.

\section{Fractional equations on the ring $\mathpzc{R}$ and the related processes}
In this section we consider various types of processes on the unit radius circumference $\mathpzc{R}$.
\subsection{Higher-order time-fractional equations}
\label{timefraceq}
We start by analyzing the processes related to the solutions of time-fractional higher-order heat-type equations.
We consider the time-changed pseudoprocesses $\Theta_{2n} \left( L^\nu (t) \right)$, $t>0$, where
\begin{equation}
L^\nu (t) \, = \, \inf \left\lbrace s > 0: H^\nu (s) \geq t \right\rbrace
\label{inversosub}
\end{equation}
and where $H^\nu (t)$, $t>0$, is a positively skewed stable process of order $\nu \in \left( 0,1 \right]$. We notice that the Laplace transform of the distribution $l_\nu (x, t)$ of \eqref{inversosub} reads (see for example Orsingher and Toaldo \cite{toaldo})
\begin{equation}
\int_0^\infty dx \, e^{-\gamma x} l_\nu (x, t)  \, = \, E_{\nu,1} \left( -\gamma t^\nu \right)
\label{laplinverso}
\end{equation}
where
\begin{equation}
E_{\nu, 1} (x) \, = \, \sum_{j=0}^\infty \frac{x^j}{\Gamma (\nu j + 1)}, \qquad x \in \mathbb{R}, \nu >0,
\end{equation}
is the Mittag-Leffler function. For pseudoprocesses related to time-fractional equations we have the next theorem.
\begin{te}
\label{teorematimefrac}
The solution to the problem, for $\nu \in \left( 0,1 \right]$, $n \in \mathbb{N}$,
\begin{equation}
\begin{cases}
\frac{\partial^\nu}{\partial t^\nu} v^\nu_{2n} (\theta, t) \, = \, - \left( - \frac{\partial^{2}}{\partial \theta^{2}} \right)^n v^\nu_{2n} (\theta, t), \qquad \theta \in \left[ 0,2\pi \right), \, t>0, \\
v^\nu_{2n} (\theta, 0) \, = \, \delta (\theta),
\end{cases}
\label{timefractionalpseudo}
\end{equation}
is the univariate (signed) distribution of $\Theta_{2n} \left( L^\nu (t) \right)$, $t>0$, which reads
\begin{equation}
v^\nu_{2n} (\theta, t) \, = \, \frac{1}{2\pi} \left( 1+2\sum_{k=1}^\infty E_{\nu, 1} \left( -k^{2n} t^\nu \right) \cos k \theta \right).
\label{3.5}
\end{equation}
The time-fractional derivative in \eqref{timefractionalpseudo} must be understood in the Caputo sense, that is
\begin{equation}
\frac{\partial^\nu}{\partial t^\nu} v_{2n}^\nu (\theta, t) \, = \, \frac{1}{\Gamma (1-\nu)} \int_0^t \frac{\frac{\partial}{\partial s} v_{2n}^\nu (\theta, s)}{(t-s)^\nu} \, ds, \qquad 0 < \nu < 1.
\end{equation}
\end{te}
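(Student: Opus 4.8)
The plan is to obtain the Fourier series \eqref{3.5} by identifying it with the (signed) density of the composition $\Theta_{2n}\left(L^\nu(t)\right)$, and then to check that this series solves the fractional Cauchy problem \eqref{timefractionalpseudo}. For the first part I would condition on the value of the inverse stable subordinator: by the (signed) Markov property of the circular pseudoprocess, the univariate law of $\Theta_{2n}\left(L^\nu(t)\right)$ has density
\begin{equation}
v^\nu_{2n}(\theta,t) \, = \, \int_0^\infty v_{2n}(\theta,s)\, l_\nu(s,t)\, ds,
\end{equation}
with $v_{2n}(\theta,s)$ the density of Theorem \ref{teoremaleggipseudo} and $l_\nu(\cdot,t)$ the genuine nonnegative density of $L^\nu(t)$. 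Inserting \eqref{leggipseudoteo} and exchanging the $k$-sum with the $ds$-integral, each harmonic contributes
\begin{equation}
\int_0^\infty e^{-k^{2n}s}\, l_\nu(s,t)\, ds \, = \, E_{\nu,1}\left(-k^{2n}t^\nu\right)
\end{equation}
by \eqref{laplinverso} with $\gamma=k^{2n}$, which yields \eqref{3.5}. The exchange is the one delicate point here: by Tonelli's theorem the sum of the absolute contributions equals $\sum_{k\ge1}E_{\nu,1}\left(-k^{2n}t^\nu\right)$, and since $E_{\nu,1}\left(-k^{2n}t^\nu\right)=O\left(k^{-2n}\right)$ as $k\to\infty$ this is finite for $t>0$, so the interchange (with the $\cos k\theta$ factors restored) is legitimate; equivalently one checks directly that $\sum_{k\ge1}e^{-k^{2n}s}$ has an integrable singularity at $s=0$ and decays exponentially at $+\infty$, so the double integral is absolutely convergent.

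For the second part I would work at the level of Fourier coefficients: write $v^\nu_{2n}(\theta,t)=\frac{1}{2\pi}+\frac{1}{\pi}\sum_{k\ge1}a_k(t)\cos k\theta$ with $a_k(t)=E_{\nu,1}\left(-k^{2n}t^\nu\right)$. The spatial operator $-\left(-\partial^2/\partial\theta^2\right)^n$ acts on the trigonometric basis as the Fourier multiplier $\cos k\theta\mapsto -k^{2n}\cos k\theta$, so the right-hand side of \eqref{timefractionalpseudo} is $\frac{1}{\pi}\sum_{k\ge1}\left(-k^{2n}\right)a_k(t)\cos k\theta$ — understood, when $\nu<1$, in the distributional sense in $\theta$, since the differentiated series no longer converges pointwise. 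On the left, differentiating the defining series of $E_{\nu,1}$ term by term with $\frac{d^\nu}{dt^\nu}t^{\nu j}=\Gamma(\nu j+1)t^{\nu j-\nu}/\Gamma(\nu j-\nu+1)$ gives the eigenrelation $\frac{\partial^\nu}{\partial t^\nu}a_k(t)=-k^{2n}a_k(t)$, so the two sides agree coefficient by coefficient. A cleaner variant of the same step takes the Laplace transform in $t$: using $\mathcal{L}\{\partial^\nu_t f\}(\mu)=\mu^\nu\widetilde f(\mu)-\mu^{\nu-1}f(0)$ the equation decouples into $\mu^\nu\widetilde a_k(\mu)-\mu^{\nu-1}=-k^{2n}\widetilde a_k(\mu)$, whence $\widetilde a_k(\mu)=\mu^{\nu-1}/(\mu^\nu+k^{2n})$, whose inversion is precisely $E_{\nu,1}\left(-k^{2n}t^\nu\right)$ — this re-derives \eqref{3.5} and confirms the equation in one stroke. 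The initial datum follows from $E_{\nu,1}(0)=1$, giving $v^\nu_{2n}(\theta,0)=\frac{1}{2\pi}+\frac{1}{\pi}\sum_{k\ge1}\cos k\theta=\delta(\theta)$, and for $\nu=1$ one has $E_{1,1}(-k^{2n}t)=e^{-k^{2n}t}$, so everything collapses to Theorem \ref{teoremaleggipseudo}.

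I expect the main obstacle to be the analytic bookkeeping rather than the formal identities: making rigorous the Fubini-type interchange against the signed measure $v_{2n}(\theta,s)\,ds$, the termwise action of the Caputo operator and of the Laplace inversion on the infinite Fourier series, and — the subtlest point — the fact that for $\nu<1$ the equation holds only distributionally in $\theta$, because $v^\nu_{2n}(\cdot,t)$ fails to be $2n$ times classically differentiable (here $k^{2n}E_{\nu,1}(-k^{2n}t^\nu)$ does not vanish as $k\to\infty$, unlike the $\nu=1$ case where the exponential decay restores smoothness). All of these rest on the uniform bound $E_{\nu,1}\left(-k^{2n}t^\nu\right)=O\left(k^{-2n}\right)$ on $\{t\ge\varepsilon\}$, which also legitimises the passage to the limit $t\downarrow0$ that produces the delta initial condition.
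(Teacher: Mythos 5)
Your argument is correct and is essentially the paper's own proof: the density of $\Theta_{2n}\left(L^\nu(t)\right)$ is obtained by integrating the series of Theorem \ref{teoremaleggipseudo} against $l_\nu(s,t)$ and applying \eqref{laplinverso} with $\gamma=k^{2n}$, and the equation is then checked term by term via the eigenrelations $\frac{\partial^\nu}{\partial t^\nu}E_{\nu,1}\left(-k^{2n}t^\nu\right)=-k^{2n}E_{\nu,1}\left(-k^{2n}t^\nu\right)$ and $-\left(-\frac{\partial^2}{\partial\theta^2}\right)^{n}\cos k\theta=-k^{2n}\cos k\theta$. The extra care you supply (Tonelli for the sum--integral interchange, the Laplace-transform rederivation of the coefficients, and the distributional reading in $\theta$ for $\nu<1$) goes beyond the paper's formal verification but does not change the route.
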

\begin{proof}
The law of $\Theta_{2n} \left( L^\nu (t) \right)$, $t>0$, is given by
\begin{align}
v^\nu_{2n} (\theta, t) \, = \, & \frac{1}{2\pi} \int_0^\infty ds \, \left( 1+2\sum_{k=1}^\infty e^{-k^{2n} s} \cos k \theta \right) l_\nu (s, t) \notag \\
= \, & \frac{1}{2\pi} \left( 1+2\sum_{k=1}^\infty E_{\nu, 1} \left( -k^{2n} t^\nu \right) \cos k \theta \right).
\label{leggetimefrac}
\end{align}
Since, $\forall k \geq 1$, we have that
\begin{align}
\frac{\partial^\nu}{\partial t^\nu} E_{\nu, 1} \left( - k^{2n} t^\nu \right)  \cos k\theta \,  = & \,  - k^{2n} E_{\nu, 1} \left( - k^{2n}t^\nu \right)  \cos k\theta \notag \\
  = \, &  (-1)^{n+1} \frac{\partial^{2n}}{\partial \theta^{2n}} E_{\nu, 1} \left( - k^{2n} t^\nu \right) \cos k \theta,
\end{align}
and therefore we conclude that \eqref{leggetimefrac} satisfies the fractional equation \eqref{timefractionalpseudo}. 
\end{proof}
\begin{os}
For $n=1$, formula \eqref{leggetimefrac} becomes the distribution of subordinated Brownian motion $\mathfrak{B} \left( L^\nu (t) \right)$, $t>0$. For $\nu = 1$ we retrieve from \eqref{3.5} the solutions \eqref{leggipseudoteo} of the even-order heat-type equations on $\mathpzc{R}$.
\end{os}

\subsection{Space-fractional equations and wrapped up stable processes}
\label{spacefraceq}
The following Theorem represents the counterpart on $\mathpzc{R}$ of the Riesz statement on the relationship between space-fractional equations and symmetric stable laws (for the non-symmetric case see the paper by Feller \cite{feller52}).
\begin{te}
\label{teoremaspacefrac}
The law of the process $\mathfrak{B} \left( H^\beta (t) \right)$, $t>0$, is given by
\begin{align}
p_{\mathfrak{B}}^\beta (\theta, t) \, = \, \frac{1}{2\pi} \left[ 1 + 2 \sum_{k=1}^\infty e^{- \left( \frac{k^2}{2} \right)^\beta t} \cos k \theta \right]
\label{leggebrownspacefrac}
\end{align}
and solves the space-fractional equation, for $\beta \in \left( 0,1 \right]$,
\begin{align}
\begin{cases}
\frac{\partial}{\partial t} p_{\mathfrak{B}}^\beta (\theta, t) \, = \, - \left( - \frac{1}{2} \frac{\partial^2}{\partial \theta^2} \right)^\beta p_{\mathfrak{B}}^\beta (\theta, t), \qquad \theta \in \left[ 0,2\pi \right), \, t >0 \\
p_{\mathfrak{B}}^\beta (\theta, 0) \, = \, \delta (\theta).
\end{cases}
\label{brownspacefrac}
\end{align}
The fractional one-dimensional Laplacian in \eqref{brownspacefrac} is defined in \eqref{laplfraz} and $H^\beta (t)$, $t>0$, is a stable subordinator of order $\beta \in \left( 0,1 \right]$.
\end{te}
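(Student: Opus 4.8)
The plan is to obtain the explicit law \eqref{leggebrownspacefrac} by subordination and then to verify that the resulting Fourier series solves the Cauchy problem \eqref{brownspacefrac} term by term, exploiting the fact that $\cos k\theta$ simultaneously diagonalizes $-\frac12\partial_\theta^2$ and its fractional power. First I would write the density of $\mathfrak{B}(H^\beta(t))$ as the mixture
\[
p_{\mathfrak{B}}^\beta(\theta,t) \, = \, \int_0^\infty p_{\mathfrak{B}}(\theta,s)\, h_\beta(s,t)\, ds ,
\]
where $h_\beta(\cdot,t)$ denotes the density of the stable subordinator $H^\beta(t)$, characterized by $\int_0^\infty e^{-\gamma s} h_\beta(s,t)\, ds = e^{-t\gamma^\beta}$. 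Substituting the Fourier series \eqref{leggebring} for $p_{\mathfrak{B}}(\theta,s)$ and interchanging sum and integral --- legitimate because $\lvert e^{-k^2 s/2}\cos k\theta\rvert \le e^{-k^2 s/2}$ while $\sum_{k\geq 1}\int_0^\infty e^{-k^2 s/2} h_\beta(s,t)\, ds = \sum_{k\geq1} e^{-(k^2/2)^\beta t} < \infty$ --- and applying the Laplace identity with $\gamma = k^2/2$ to each term yields at once the claimed expression \eqref{leggebrownspacefrac}.

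Next I would check that this series solves \eqref{brownspacefrac}. Since $\cos k\theta$ is an eigenfunction of $-\frac12\partial_\theta^2$ with eigenvalue $k^2/2$, the Bochner representation \eqref{laplfraz} together with the classical integral $\frac{\sin\pi\beta}{\pi}\int_0^\infty \lambda^{\beta-1} a(\lambda+a)^{-1}\, d\lambda = a^\beta$ shows that $\cos k\theta$ is an eigenfunction of $-(-\frac12\partial_\theta^2)^\beta$ with eigenvalue $-(k^2/2)^\beta$. Because for every fixed $t>0$ the weights $e^{-(k^2/2)^\beta t}$ decay faster than any power of $k$, the series \eqref{leggebrownspacefrac} and all its $\theta$-derivatives converge uniformly, so the fractional Laplacian may be applied term by term; comparing with $\partial_t$ applied term by term gives the equation. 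The initial datum is recovered by letting $t\to 0^+$, since $\frac{1}{2\pi}(1+2\sum_{k\geq1}\cos k\theta)$ is the Fourier expansion of $\delta(\theta)$ on $[0,2\pi)$ (convergence in the sense of distributions, equivalently weak convergence of the probability measures). The interpretations recorded after the statement, including $\mathfrak{B}(2H^\beta(t))\stackrel{\textrm{law}}{=}\mathfrak{S}^{2\beta}(t)$ and the Poisson kernel obtained at $\beta=\frac12$, then follow by inspection of the explicit series.

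The step I expect to require the most care is making the action of the nonlocal operator $-(-\frac12\partial_\theta^2)^\beta$ on the wrapped-up series fully rigorous: one has to justify differentiating under the integral sign in the Bochner formula and exchanging that integral with the infinite Fourier sum. The cleanest way to sidestep this is to pass to Fourier coefficients: writing $\widehat p_k(t)$ for the $k$-th cosine coefficient of $p_{\mathfrak{B}}^\beta(\cdot,t)$, the problem \eqref{brownspacefrac} is equivalent to the scalar equations $\frac{d}{dt}\widehat p_k(t) = -(k^2/2)^\beta \widehat p_k(t)$ with $\widehat p_k(0)=1/\pi$ for $k\geq1$ and $\widehat p_0 \equiv 1/(2\pi)$, whose unique solution is exactly the coefficient $e^{-(k^2/2)^\beta t}/\pi$ read off from the subordination computation. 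Everything else --- the subordination identity, the Laplace transform of the stable density, and the delta expansion --- is routine.
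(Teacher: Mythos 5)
Your proposal is correct and follows essentially the same route as the paper's proof: subordination of the circular Brownian law by the stable density $h_\beta$ (using $\int_0^\infty e^{-\gamma s}h_\beta(s,t)\,ds=e^{-t\gamma^\beta}$ with $\gamma=k^2/2$) to obtain \eqref{leggebrownspacefrac}, followed by a term-by-term verification resting on the eigenvalue relation $\left(-\frac12\frac{\partial^2}{\partial\theta^2}\right)^\beta\cos k\theta=\left(\frac{k^2}{2}\right)^\beta\cos k\theta$. The only difference is technical: the paper derives this eigenvalue by rewriting the Bochner resolvent integral through the semigroup $e^{-u\left(-\frac12\partial_\theta^2\right)}$ and a Gamma-function identity, while you use the convergent scalar form $\frac{\sin\pi\beta}{\pi}\int_0^\infty\lambda^{\beta-1}\frac{a}{\lambda+a}\,d\lambda=a^\beta$; your additional remarks on uniform convergence, the recovery of the initial datum, and the equivalent Fourier-coefficient ODE formulation only strengthen what the paper carries out formally.
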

\begin{proof}
The law of $\mathfrak{B} \left( H^\beta (t) \right)$, $t>0$, is given by
\begin{equation}
p_{\mathfrak{B}}^\beta (\theta, t) \, = \, \int_0^\infty ds \, p_{\mathfrak{B}} (\theta, s) \, h_\beta (s, t) \, = \, \frac{1}{2\pi} \left[ 1 + 2\sum_{k=1}^\infty e^{- \left( \frac{k^2}{2} \right)^\beta t} \cos k\theta   \right],
\end{equation}
where $p_{\mathfrak{B}}$ is the law of circular Brownian motion and $h_\beta$ is the density of a positively skewed random process of order $\beta \in \left( 0,1 \right]$. In order to check that \eqref{leggebrownspacefrac} solves \eqref{brownspacefrac} it is convenient to write the fractional derivative appearing in \eqref{brownspacefrac} as
\begin{align}
\left( - \frac{1}{2} \frac{\partial^2}{\partial \theta^2} \right)^\beta \, = \, & - \frac{\sin \pi \beta}{\pi} \int_0^\infty  \left( \lambda + \left( -\frac{1}{2} \frac{\partial^2}{\partial \theta^2} \right) \right)^{-1} \lambda^\beta \, d\lambda \notag \\
= \, & - \frac{1}{\Gamma (\beta) \Gamma (1-\beta)} \int_0^\infty  \lambda^\beta \int_0^\infty  e^{-u\lambda - u \left( -\frac{1}{2} \frac{\partial^2}{\partial \theta^2} \right)} \, du \, d\lambda \notag \\
= \, & \frac{1}{\Gamma (-\beta )} \int_0^\infty  u^{-\beta - 1} e^{-u \left( - \frac{1}{2} \frac{\partial^2}{\partial \theta^2} \right)} \, du.
\label{contiformali}
\end{align}
From \eqref{contiformali} we have therefore that
\begin{align}
\left( - \frac{1}{2} \frac{\partial^2}{\partial \theta^2} \right)^\beta \cos k \theta \, = \, & \frac{1}{\Gamma \left( - \beta \right)} \int_0^\infty du \, u^{-\beta -1 } e^{-u \left( - \frac{1}{2} \frac{\partial^2}{\partial \theta^2} \right)} \cos k \theta \notag \\
= \, & \frac{1}{\Gamma (-\beta )} \int_0^\infty du \, u^{-\beta - 1} \sum_{j=0}^\infty \frac{(-u)^j}{j!} \left( - \frac{1}{2} \right)^j \frac{\partial^{2j}}{\partial \theta^{2j}} \cos k \theta \notag \\
= \, & \frac{1 }{\Gamma (-\beta )} \int_0^\infty du \, u^{-\beta - 1} \sum_{j=0}^\infty \frac{(-u)^j}{j!} \frac{k^{2j}}{2^{j}} \cos k \theta \notag \\
= \, & \frac{1}{\Gamma (-\beta )} \int_0^\infty du \, u^{-\beta - 1} e^{-u \frac{k^2}{2}} \cos k \theta \notag \\
 = \, & \left( \frac{k^2}{2} \right)^\beta \cos k \theta,
\end{align}
and this shows that \eqref{leggebrownspacefrac} satisfies \eqref{brownspacefrac}. 
\end{proof}
\begin{os}
Another way to prove that
\begin{equation}
\left( - \frac{1}{2} \frac{\partial^2}{\partial \theta^2} \right)^\beta \cos k \theta \, = \, \left(  \frac{k^2}{2} \right)^\beta \cos k \theta
\end{equation}
can be traced in the paragraph 4.6, page 428 of Balakrishnan \cite{balakrishnan}, which confirms our result.
\end{os}
\begin{te}
For the wrapped up version, say $\mathfrak{S}^{2\beta} (t)$, $t>0$, of the symmetric stable processes $S^{2\beta} (t)$, $t>0$, with characteristic function $\mathbb{E}e^{i\xi S^{2\beta} (t)} = e^{-t | \xi |^{2\beta}}$, we have the following equality in distribution
\begin{align}
\mathfrak{S}^{2\beta} (t) \, \stackrel{\textrm{law}}{=} \,  \mathfrak{B} \left( 2 H^\beta (t) \right) \, \stackrel{\textrm{law}}{=} \, \mathfrak{B} \left( H^\beta  \left( 2^\beta t \right) \right), \qquad t >0.
\end{align}
\end{te}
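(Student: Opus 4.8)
The plan is to identify all three laws through their Fourier coefficients on the ring $\mathpzc{R}$ and then to invoke uniqueness of the Fourier expansion. First I would recall that, by wrapping up the symmetric stable law on the line exactly as done in the Introduction, the density of $\mathfrak{S}^{2\beta}(t)$ equals
\begin{equation*}
p_{\mathfrak{S}^{2\beta}} (\theta, t) \, = \, \frac{1}{2\pi} \left[ 1+2\sum_{k=1}^\infty e^{-k^{2\beta} t}  \cos k \theta \right], \qquad \theta \in \left[ 0,2\pi \right),
\end{equation*}
so that it remains only to check that the subordinated circular Brownian motions $\mathfrak{B}\!\left( 2H^\beta(t) \right)$ and $\mathfrak{B}\!\left( H^\beta(2^\beta t) \right)$ carry precisely these Fourier coefficients.

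Second, I would carry out the subordination. Denoting by $h_\beta(\cdot, t)$ the density of $H^\beta(t)$ and using \eqref{leggebring}, namely $p_{\mathfrak{B}}(\theta, s) = \frac{1}{2\pi}\left( 1+2\sum_{k\geq 1} e^{-k^2 s/2}\cos k\theta \right)$, one obtains
\begin{equation*}
\frac{\Pr\left\lbrace \mathfrak{B}\!\left( 2H^\beta(t) \right) \in d\theta \right\rbrace}{d\theta} \, = \, \int_0^\infty p_{\mathfrak{B}}(\theta, 2s)\, h_\beta(s, t)\, ds \, = \, \frac{1}{2\pi} \left( 1+2\sum_{k=1}^\infty \cos k\theta \int_0^\infty e^{-k^2 s}\, h_\beta(s,t)\, ds \right),
\end{equation*}
the interchange of the (exponentially convergent) sum with the integral being justified by dominated convergence. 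The inner integral is the Laplace transform of the stable subordinator evaluated at the point $k^2$, i.e.\ $\mathbb{E}e^{-k^2 H^\beta(t)} = e^{-t (k^2)^\beta} = e^{-t k^{2\beta}}$, which is exactly the coefficient appearing above. Replacing $2H^\beta(t)$ by $H^\beta(2^\beta t)$ and repeating the computation gives $\mathbb{E}e^{-\frac{k^2}{2} H^\beta(2^\beta t)} = e^{-2^\beta t (k^2/2)^\beta} = e^{-t k^{2\beta}}$; hence the three Fourier series coincide term by term, and the stated equalities in law follow.

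Third, for the second equality I would also record the shorter structural argument: the stable subordinator is self-similar, $H^\beta(ct) \stackrel{\text{law}}{=} c^{1/\beta} H^\beta(t)$ for all $c>0$, so taking $c = 2^\beta$ yields $H^\beta(2^\beta t) \stackrel{\text{law}}{=} 2\, H^\beta(t)$, and composition with the independent process $\mathfrak{B}$ preserves identity in law. The only genuinely delicate point — and the one I would write out most carefully — is the exchange of summation and expectation in the subordination step; it is harmless here because $\sum_{k\geq 1} e^{-k^2 s/2}$ is finite for every $s>0$ and stays dominated after integration against $h_\beta(s,t)$, but it is the place where an inattentive argument could break. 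Everything else reduces to the identity $\mathbb{E}e^{-\gamma H^\beta(t)} = e^{-t\gamma^\beta}$ and to Theorem \ref{teoremaspacefrac}.
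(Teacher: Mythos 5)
Your argument is correct, but it distributes the work differently from the paper. The paper's proof consists almost entirely of the step you take for granted: starting from the wrapped representation $p_{\mathfrak{S}^{2\beta}}(\theta,t)=\frac{1}{2\pi}\sum_{m=-\infty}^{\infty}\int_{\mathbb{R}}e^{-i\xi(\theta+2m\pi)}e^{-t|\xi|^{2\beta}}\,d\xi$, it computes the Fourier coefficients by the same delta-function device used for Theorem \ref{teoremaleggipseudo}, obtaining $a_k=\frac{1}{\pi}e^{-k^{2\beta}t}$ and hence \eqref{319}; the identification with $\mathfrak{B}\left(2H^\beta(t)\right)$ and $\mathfrak{B}\left(H^\beta(2^\beta t)\right)$ is then left implicit, since \eqref{leggebrownspacefrac} with $t\mapsto 2^\beta t$ already supplies the same coefficients. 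You do the opposite: you quote the wrapped-stable expansion and instead spell out the subordination computation (term-by-term use of $\mathbb{E}e^{-\gamma H^\beta(t)}=e^{-t\gamma^\beta}$, with the interchange of sum and integral justified), and you add the self-similarity shortcut $H^\beta(2^\beta t)\stackrel{\textrm{law}}{=}2H^\beta(t)$, which is a clean way to get the second equality and does not appear in the paper. The one point to repair is the mild circularity of ``recalling'' the expansion of $p_{\mathfrak{S}^{2\beta}}$ from the Introduction: that displayed formula is only a preview of precisely this theorem, so a self-contained write-up should either reproduce the coefficient computation (identical to the proof of Theorem \ref{teoremaleggipseudo} with $e^{-\xi^{2n}t}$ replaced by $e^{-t|\xi|^{2\beta}}$) or cite that computation explicitly; with that done, your Fourier-uniqueness argument together with the subordination identities completes the proof.
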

\begin{proof}
The density of $\mathfrak{S}^{2\beta} (t)$, $t>0$, must be written as
\begin{equation}
p_{\mathfrak{S}^{2\beta}} (\theta, t) \, = \, \frac{\Pr \left\lbrace \mathfrak{S}^{2\beta} (t) \in d\theta \right\rbrace}{ d\theta } \, = \, \frac{1}{2\pi} \sum_{m=-\infty}^\infty \int_{-\infty}^\infty d\xi \, e^{-i\xi (\theta + 2m\pi)} e^{-t | \xi |^{2\beta}}.
\label{stabiliripiegate}
\end{equation}
The Fourier expansion of \eqref{stabiliripiegate} becomes
\begin{equation}
p_{\mathfrak{S}^{2\beta}} (\theta, t) \, = \, \frac{a_0}{2} + \sum_{k=1}^\infty a_k \cos k \theta
\end{equation}
where
\begin{align}
a_k \, = \, & \frac{1}{\pi} \int_0^{2\pi} d\theta \frac{1}{2\pi} \sum_{m=-\infty}^\infty \int_{-\infty}^\infty d\xi \, e^{-i\xi (\theta + 2m\pi )} e^{-t | \xi |^{2\beta}} \cos k \theta \notag \\
= \, & \frac{1}{\pi} \sum_{m=-\infty}^\infty \int_{-\infty}^\infty d\xi \, e^{-t |\xi |^{2\beta}} \int_{m}^{m+1} d\theta \, e^{-i\xi 2\pi \theta} \cos 2 k \pi \theta \notag \\
= \, & \frac{1}{(2\pi)^2} \int_{-\infty}^\infty d\xi \, e^{-t |\xi |^{2\beta}} \int_{-\infty}^\infty dy \, e^{-i\xi y} \left( e^{i y k} + e^{-i y k} \right) \notag \\
= \, & \frac{1}{2\pi} \int_{-\infty}^\infty d\xi \, e^{-t |\xi |^{2\beta}} \left[ \delta (\xi - k) + \delta (\xi + k ) \right] \, = \, \frac{1}{\pi} e^{-tk^{2\beta}}.
\end{align}
This permits us to conclude that
\begin{equation}
p_{\mathfrak{S}^{2\beta}} (\theta, t) \, = \, \frac{1}{2\pi} + \frac{1}{\pi} \sum_{k=1}^\infty e^{-k^{2\beta} t} \cos k \theta.
\label{319}
\end{equation} 
\end{proof}
While the integral in \eqref{stabiliripiegate} (representing the Fourier inverse of symmetric stable laws) cannot carried out, its circular analogue can be explicitely worked out and leads to the Fourier expansion \eqref{319}.
\begin{co}
In view of the results of Theorems \ref{teorematimefrac} and \ref{teoremaspacefrac} we have that the solution to the space-time fractional equation, for $\beta \in \left( 0,1 \right]$,
\begin{align}
\begin{cases}
\frac{\partial^\nu}{\partial t^\nu} p_{\mathfrak{B}}^{\nu, \beta} (\theta, t) \, = \, - \left( - \frac{1}{2} \frac{\partial^2}{\partial \theta^2} \right)^\beta p_{\mathfrak{B}}^{\nu, \beta} (\theta, t), \qquad \theta \in \left[ 0,2\pi \right), \, t>0 \\
p_{\mathfrak{B}}^{\nu, \beta} (\theta, 0) \, = \, \delta (\theta).
\end{cases}
\end{align}
can be written as
\begin{equation}
p_{\mathfrak{B}}^{\nu, \beta} (\theta, t) \, = \, \frac{1}{2\pi} + \frac{1}{\pi} \sum_{k=1}^\infty  E_{\nu, 1} \left( - \left( \frac{k^2}{2} \right)^\beta t^\nu \right) \cos k \theta,
\end{equation}
and coincides with the law of the process
\begin{equation}
\mathpzc{F}^{\nu, \beta} (t) \, = \, \mathfrak{B} \left( H^\beta \left( L^\nu (t) \right) \right), \qquad t>0.
\label{timespaceprocess}
\end{equation}
In \eqref{timespaceprocess} $H^\beta$ is a stable subordinator of order $\beta \in \left( 0,1 \right]$ and $L^\nu$ is the inverse of $H^\nu$ as defined in \eqref{inversosub}.
\end{co}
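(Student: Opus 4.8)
The plan is to combine the two preceding theorems by iterating time changes: one treats the space-fractional process of Theorem~\ref{teoremaspacefrac} as the ``base'' circular process and then slows it down with the inverse stable subordinator $L^\nu$ exactly as in Theorem~\ref{teorematimefrac}. First I would record that $\mathfrak{B}\left( H^\beta (t) \right)$, $t>0$, is a genuine circular process whose transition density is $p_{\mathfrak{B}}^\beta(\theta,s)$ of \eqref{leggebrownspacefrac}, that is, a symmetric law on $\mathpzc{R}$ with Fourier coefficients $e^{-(k^2/2)^\beta s}$. Assuming $H^\beta$ and $H^\nu$ (hence $L^\nu$) independent, the one-dimensional distribution of $\mathpzc{F}^{\nu,\beta}(t)=\mathfrak{B}\left( H^\beta\left( L^\nu(t)\right)\right)$ is the mixture
\[
p_{\mathfrak{B}}^{\nu,\beta}(\theta,t) \, = \, \int_0^\infty p_{\mathfrak{B}}^\beta(\theta,s)\, l_\nu(s,t)\, ds ,
\]
which is the same scheme used in the proof of Theorem~\ref{teorematimefrac}. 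Expanding $p_{\mathfrak{B}}^\beta$ in its Fourier series, integrating term by term against $l_\nu(\cdot,t)$, and using the identity \eqref{laplinverso} with $\gamma=(k^2/2)^\beta$ (together with $\int_0^\infty l_\nu(s,t)\,ds=1$ for the constant term), each exponential coefficient is replaced by the Mittag-Leffler coefficient $E_{\nu,1}\!\left(-(k^2/2)^\beta t^\nu\right)$, giving the asserted expansion.

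Second, I would check that this series solves the space-time fractional Cauchy problem by acting termwise with the two operators, each already diagonalized on $\{\cos k\theta\}$ in the excerpt. From the computation in the proof of Theorem~\ref{teoremaspacefrac} one has $\left(-\tfrac12\tfrac{\partial^2}{\partial\theta^2}\right)^\beta\cos k\theta=(k^2/2)^\beta\cos k\theta$, and from the series definition of $E_{\nu,1}$ with the Caputo derivative (as in the proof of Theorem~\ref{teorematimefrac}) one has $\frac{\partial^\nu}{\partial t^\nu}E_{\nu,1}(-\lambda t^\nu)=-\lambda E_{\nu,1}(-\lambda t^\nu)$. Writing $\lambda_k=(k^2/2)^\beta$ these combine, for every $k\geq 1$, into
\[
\frac{\partial^\nu}{\partial t^\nu}\!\left[ E_{\nu,1}(-\lambda_k t^\nu)\cos k\theta \right] \, = \, -\lambda_k E_{\nu,1}(-\lambda_k t^\nu)\cos k\theta \, = \, -\left(-\tfrac12\tfrac{\partial^2}{\partial\theta^2}\right)^\beta\!\left[ E_{\nu,1}(-\lambda_k t^\nu)\cos k\theta \right],
\]
and summing over $k$ recovers the fractional equation, while the initial condition follows from $E_{\nu,1}(0)=1$ and the Fourier series $\delta(\theta)=\frac{1}{2\pi}+\frac{1}{\pi}\sum_{k\geq1}\cos k\theta$ (as used already in the proof of Theorem~\ref{teoremaleggipseudo}).

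The delicate point — and the main obstacle — is the legitimacy of all these term-by-term operations. For $\nu<1$ the Mittag-Leffler factors decay only polynomially, $E_{\nu,1}(-\lambda_k t^\nu)\sim\left(\Gamma(1-\nu)\lambda_k t^\nu\right)^{-1}$ as $k\to\infty$, i.e. like $k^{-2\beta}$, so the series for $p_{\mathfrak{B}}^{\nu,\beta}$ converges only in the sense of distributions (or pointwise for $\theta\neq0$), and the interchanges of $\sum_k$ with $\int_0^\infty(\cdot)\,l_\nu(s,t)\,ds$, with the Bochner integral \eqref{laplfraz}, and with the Caputo operator must be read accordingly. The clean route is to take Laplace transforms in $t$: since $\int_0^\infty e^{-st}E_{\nu,1}(-\lambda t^\nu)\,dt=s^{\nu-1}/(s^\nu+\lambda)$, the fractional problem reduces, coefficient by coefficient, to the algebraic identity $s^\nu\widehat{a}_k(s)-s^{\nu-1}=-\lambda_k\widehat{a}_k(s)$, which is precisely the transform of the mixture representation; this is the argument adopted in Orsingher and D'Ovidio \cite{ecporsdov}, and I would make it the rigorous backbone while keeping the termwise computation as motivation. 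The remaining items — independence of $H^\beta$ and $L^\nu$, so that the composed density is the genuine mixture above, and the identification of $p_{\mathfrak{B}}^{\nu,\beta}$ with the law of the process \eqref{timespaceprocess} — are then immediate consequences of Theorems~\ref{teorematimefrac} and \ref{teoremaspacefrac}.
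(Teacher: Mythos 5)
Your proposal is correct and takes essentially the same route as the paper: the law of $\mathfrak{B}\left( H^\beta \left( L^\nu (t) \right) \right)$ is written as a mixture over the subordinator laws, the Fourier coefficients $e^{-(k^2/2)^\beta s}$ are integrated against $l_\nu(\cdot,t)$ using \eqref{laplinverso} to yield the Mittag-Leffler coefficients, and the fractional equation itself is settled by the eigenfunction identities already established in Theorems \ref{teorematimefrac} and \ref{teoremaspacefrac}. Your additional remarks on the termwise interchanges and the distributional convergence of the series go slightly beyond the paper, which confines its proof to deriving the distribution and delegates the rest to the two cited theorems.
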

\begin{proof}
Here we only derive the distribution of $\mathpzc{F}^{\nu, \beta} (t)$, $t>0$. We have that
\begin{align}
\Pr \left\lbrace \mathpzc{F}^{\nu, \beta} (t) \in d\theta \right\rbrace \, = \, & d\theta \int_0^\infty \Pr \left\lbrace \mathfrak{B} (s) \in d\theta \right\rbrace \int_0^\infty \Pr \left\lbrace H^\beta (w) \in ds \right\rbrace \, \Pr \left\lbrace L^\nu (t) \in dw \right\rbrace \notag \\
= \, & \frac{d\theta}{2\pi} + \frac{d\theta}{\pi} \int_0^\infty \sum_{k=1}^\infty e^{-\frac{k^2}{2}s} \cos k \theta \int_0^\infty \Pr \left\lbrace H^\beta (w) \in ds \right\rbrace \, \Pr \left\lbrace L^\nu (t) \in dw \right\rbrace \notag \\
= \, & \frac{d\theta}{2\pi} + \frac{d\theta}{\pi} \int_0^\infty \sum_{k=1}^\infty e^{- \left( \frac{k^2}{2} \right)^\beta w} \cos k \theta \,  \Pr \left\lbrace L^\nu (t) \in dw \right\rbrace \notag \\
= \, & \frac{d\theta}{2\pi} \left[ 1 + 2 \sum_{k=1}^\infty \cos k \theta \, E_{\nu, 1} \left( - \left( \frac{k^2}{2} \right)^\beta t^\nu \right) \right],
\end{align}
where in the last step we applied \eqref{laplinverso}. 
\end{proof}

\section{From pseudoprocesses to Poisson kernels}
In this section we show that the composition of pseudoprocesses of order $n$ running on the circumference $\mathpzc{R}$ with positively skewed stable processes of order $\frac{1}{n}$ leads to the Poisson kernel. This is the circular counterpart of the composition of pseudoprocesses with stable subordinators which leads to Cauchy processes. In both cases pseudoprocesses stopped at $H^{\frac{1}{n}} (t)$, $t>0$, yield genuine random variables.

We distinguish the case where $n$ is even from the case of odd-order pseudoprocesses. We have the first result in Theorem \ref{pseudokernel}.
\begin{te}
\label{pseudokernel}
The composition $\Theta_{2n} \left( H^{\frac{1}{2n}} (t) \right)$, $t>0$, of the pseudoprocess $\Theta_{2n}$ with the stable process $H^{\frac{1}{2n}} (t)$, $t>0$, has density
\begin{align}
\Pr \left\lbrace \Theta_{2n} \left( H^{\frac{1}{2n}} (t)  \right) \in d\theta \right\rbrace \, = \, \frac{d\theta}{2\pi} \frac{1-e^{-2t}}{1+e^{-2t}-2e^{-t}\cos \theta}, \qquad n \in \mathbb{N},
\label{nucleodipoissonteo}
\end{align}
and distribution function
\begin{align}
\Pr \left\lbrace \Theta_{2n} \left( H^{\frac{1}{2n}} (t)  \right) < \theta \right\rbrace \, = \, 
\begin{cases}
\frac{1}{\pi} \arctan \left( \frac{1+e^{-t}}{1-e^{-t}} \tan \frac{\theta}{2} \right), \qquad & \theta \in \left[ 0,\pi \right], \\
1+ \frac{1}{\pi} \arctan \left( \frac{1+e^{-t}}{1-e^{-t}} \tan \frac{\theta}{2} \right), \qquad & \theta \in \left( \pi, 2\pi \right),
\end{cases}
\label{ripartizionenucleo}
\end{align}
which are independent from $n$.
\end{te}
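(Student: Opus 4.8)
The plan is to derive \eqref{nucleodipoissonteo} by conditioning on the stable time. Since $\Theta_{2n}$ and $H^{\frac{1}{2n}}$ are independent, writing $h_{\frac{1}{2n}}(s,t)$ for the density of $H^{\frac{1}{2n}}(t)$ and inserting the law $v_{2n}$ of Theorem \ref{teoremaleggipseudo}, one has
\[
\frac{\Pr\left\lbrace \Theta_{2n}\left(H^{\frac{1}{2n}}(t)\right)\in d\theta\right\rbrace}{d\theta} \, = \, \int_0^\infty v_{2n}(\theta,s)\, h_{\frac{1}{2n}}(s,t)\, ds \, = \, \frac{1}{2\pi} + \frac{1}{\pi}\sum_{k=1}^\infty \cos k\theta \int_0^\infty e^{-k^{2n}s}\, h_{\frac{1}{2n}}(s,t)\, ds .
\]
The crucial point is that the inner integral is the Laplace transform of a positively skewed stable subordinator, $\int_0^\infty e^{-\lambda s} h_\beta(s,t)\,ds = e^{-t\lambda^\beta}$, evaluated at $\beta=\frac{1}{2n}$ and $\lambda = k^{2n}$, so it equals $e^{-t(k^{2n})^{1/(2n)}} = e^{-tk}$. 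The order $2n$ cancels here --- this is exactly why the final distribution does not depend on $n$ --- and one is left with $\frac{1}{2\pi}+\frac{1}{\pi}\sum_{k=1}^\infty e^{-tk}\cos k\theta$.

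Next I would identify this series with the Poisson kernel. Putting $r=e^{-t}$ and taking real parts in $\sum_{k=1}^\infty (re^{i\theta})^k = re^{i\theta}/(1-re^{i\theta})$ gives $\sum_{k=1}^\infty r^k\cos k\theta = (r\cos\theta - r^2)/(1-2r\cos\theta + r^2)$, hence $\frac{1}{2\pi}\left(1+2\sum_{k=1}^\infty r^k\cos k\theta\right) = \frac{1}{2\pi}\,\frac{1-r^2}{1-2r\cos\theta+r^2}$, which is \eqref{nucleodipoissonteo}. For the distribution function I would integrate the kernel, $\Pr\left\lbrace \Theta_{2n}(H^{\frac{1}{2n}}(t))<\theta\right\rbrace = \frac{1}{2\pi}\int_0^\theta \frac{1-r^2}{1-2r\cos\phi+r^2}\,d\phi$, and use the Weierstrass substitution $u=\tan(\phi/2)$, under which $1-2r\cos\phi+r^2 = \bigl((1-r)^2+(1+r)^2u^2\bigr)/(1+u^2)$ and $d\phi = 2\,du/(1+u^2)$; the integral then collapses to $\frac{1}{\pi}\arctan\left(\frac{1+r}{1-r}\tan\frac{\theta}{2}\right)$. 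For $\theta\in[0,\pi]$ this is already the stated expression (and it equals $\frac{1}{2}$ at $\theta=\pi$, as required by symmetry), while for $\theta\in(\pi,2\pi)$ one has $\tan(\theta/2)<0$ and the principal branch of $\arctan$ introduces a jump, so the constant $1$ must be added in order that the distribution function be continuous and tend to $1$ as $\theta\to 2\pi^-$; this yields \eqref{ripartizionenucleo}.

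The step that genuinely needs care is the term-by-term Laplace inversion in the first display, that is, the interchange of $\sum_k$ with the integral against $h_{\frac{1}{2n}}(s,t)\,ds$. As $s\to 0^+$ the partial sums of $\sum_k e^{-k^{2n}s}$ are dominated by a constant multiple of $\int_0^\infty e^{-x^{2n}s}\,dx = \Gamma\bigl(1+\tfrac{1}{2n}\bigr)s^{-1/(2n)}$, so the interchange is legitimate provided $\int_0^\infty s^{-1/(2n)}\,h_{\frac{1}{2n}}(s,t)\,ds = \mathbb{E}\bigl[H^{\frac{1}{2n}}(t)^{-1/(2n)}\bigr]<\infty$; this holds because stable subordinators have finite negative moments of every order, and dominated convergence then applies. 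The remaining loose end, the branch bookkeeping for $\arctan$ on $(\pi,2\pi)$, is settled by continuity together with the normalization $\Pr\left\lbrace \Theta_{2n}(H^{\frac{1}{2n}}(t))\in[0,2\pi)\right\rbrace=1$.
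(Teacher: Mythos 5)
Your proposal is correct and follows essentially the same route as the paper: conditioning on the stable time, using the Laplace transform $\int_0^\infty e^{-\lambda s} h_{\frac{1}{2n}}(s,t)\,ds = e^{-t\lambda^{1/(2n)}}$ so that the order $2n$ cancels, summing the resulting geometric series into the Poisson kernel, and integrating with the same antiderivative (you derive it by the substitution $u=\tan(\phi/2)$ where the paper cites Gradshteyn--Ryzhik 2.552(3)). Your added justification of the sum--integral interchange via the finite negative moments of the stable subordinator is a small refinement the paper omits, not a different approach.
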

\begin{proof}
We have that
\begin{align}
\Pr \left\lbrace \Theta_{2n} \left( H^{\frac{1}{2n}}  (t) \right) \in d\theta \right\rbrace \, = \, & d\theta \int_0^\infty ds \, \left[ \frac{1}{2\pi} + \frac{1}{\pi} \sum_{k=1}^\infty e^{-k^{2n} s} \cos k \theta \right] \, h_{\frac{1}{2n}} (s,t) \notag \\
= \, & d\theta \left[ \frac{1}{2\pi} + \frac{1}{\pi} \sum_{k=1}^\infty \cos k \theta e^{-kt} \right] \notag \\
= \, & \frac{d\theta}{2\pi} \left[ 1+ \frac{e^{-t+ i \theta}}{1+e^{-t+i\theta}} + \frac{e^{-t-i\theta}}{1-e^{-t-i\theta}} \right] \notag \\
= \, & \frac{d\theta}{2\pi} \frac{1-e^{-2t}}{1+e^{-2t} -2 e^{-t} \cos \theta}.
\end{align}
The result \eqref{ripartizionenucleo} is derived by applying formula 2.552(3) page 172 of Gradshteyn and Ryzhik \cite{G-R}
\begin{align}
\int \frac{dx}{a+b \cos x} \, = \, \frac{2}{\sqrt{a^2-b^2}} \arctan \frac{\sqrt{a^2-b^2} \tan \frac{x}{2}}{a+b}, \qquad a^2 > b^2.
\label{primitivanucleo}
\end{align} 
\end{proof}
\begin{os}
The Poisson kernel \eqref{nucleodipoissonteo} can be interpreted as the distribution of the process $\bm{B} \left( \mathfrak{T}_{\mathpzc{R}} \right)$ where $\bm{B}$ is a planar Brownian motion and $\mathfrak{T}_{\mathpzc{R}} = \inf \left\lbrace t>0 : \bm{B} (t) \in \mathpzc{R} \right\rbrace$. In the case of Theorem \ref{pseudokernel} the planar Brownian motion starts from the point $\left( e^{-t},0 \right)$. Therefore we have that
\begin{equation}
\bm{B} \left( \mathfrak{T}_{\mathpzc{R}} \right) \, \stackrel{\textrm{law}}{=} \, \Theta_{2n} \left( H^{\frac{1}{2n}} (t) \right), \qquad t>0.
\label{leggepseudoprimo}
\end{equation}
This means that a pseudoprocess running on the ring $\mathpzc{R}$ and stopped at a stable time $H^{\frac{1}{2n}} (t)$, $t>0$, has the same distribution of a planar Brownian motion starting from $\left( e^{-t}, 0 \right)$ at the first exit time from the unit-radius circle. The result \eqref{leggepseudoprimo} holds for all $n \in \mathbb{N}$ and represents the circular counterpart of the composition of pseudoprocesses on the line with stable subordinators $H^{\frac{1}{n}} (t)$, $t>0$, which possesses a Cauchy distributed law. As $t \to \infty$ the distribution \eqref{nucleodipoissonteo} converges to the uniform law.
\end{os}

\begin{figure} [htp!]
		\centering
		\caption{In the picture the density of the circular Brownian motion (dotted line) and the kernel \eqref{nucleodipoissonteo} are represented.}
		\includegraphics[scale=0.23]{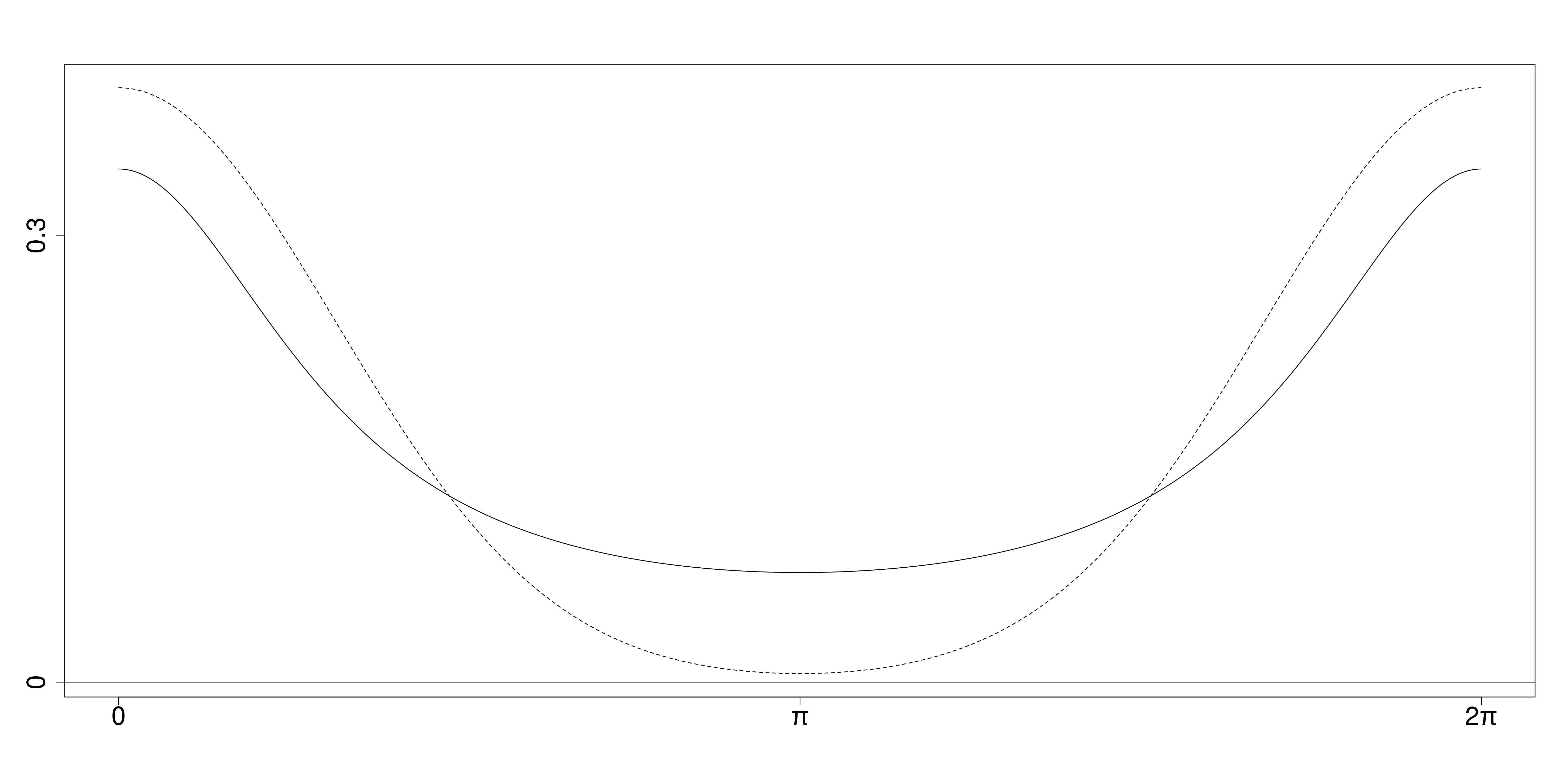} 
		\label{}
\end{figure}

\begin{os}
In view of \eqref{ripartizionenucleo} we note that for $\Theta_{2n} \left( H^{\frac{1}{2n}} (t) \right)$, $t>0$, the probability of staying in the right-hand side of $\mathpzc{R}$ has the remarkably simple form
\begin{equation}
\Pr \left\lbrace -\frac{\pi}{2} <  \Theta_{2n} \left( H^{\frac{1}{2n}} (t) \right) < \frac{\pi}{2} \right\rbrace \, = \, \frac{1}{2} + \frac{2}{\pi} \arctan e^{-t}, \qquad \forall t > 0.
\end{equation}
\end{os}

We now pass to the Poisson kernel associated to odd-order pseudoprocesses. The asymmetry implies that the density of the composition $\Theta_{2n+1} \left( H^{\frac{1}{2n+1}} (t) \right)$, $t>0$, is  bit more complicated than \eqref{nucleodipoissonteo}.
\begin{te}
The composition $\Theta_{2n+1} \left( H^{\frac{1}{2n+1}} (t) \right)$, $t>0$, has density
\begin{equation}
\Pr \left\lbrace \Theta_{2n+1} \left( H^{\frac{1}{2n+1}} (t) \right) \in d\theta \right\rbrace = \frac{d\theta}{2\pi} \frac{1-e^{-2a_nt}}{1+e^{-2a_nt}-2e^{-a_nt}\cos \left( \theta + b_nt \right)},
\label{nucleodispariteo}
\end{equation}
and distribution function
\begin{align}
& \Pr \left\lbrace \Theta_{2n+1} \left( H^{\frac{1}{2n+1}} (t) \right) < \theta \right\rbrace = \notag \\
= \, & 
\begin{cases} \frac{1}{\pi} \left[ \arctan \frac{1+e^{-a_nt}}{1-e^{-a_nt}} \tan \frac{\theta + b_nt}{2} - \arctan \frac{1+e^{-a_nt}}{1-e^{-a_nt}} \tan \frac{b_nt}{2} \right], \quad & 0 < \frac{\theta + b_nt}{2} < \pi,   \\ 1+\frac{1}{\pi} \arctan \frac{1+e^{-a_nt}}{1-e^{-a_nt}} \tan \frac{\theta + b_n t}{2} - \frac{1}{\pi} \arctan \frac{1+e^{-a_n t}}{1-e^{-a_n t}} \tan \frac{b_nt}{2}, & \pi < \theta < 2\pi-\frac{b_nt}{2}.  \end{cases}
\label{ripartiznucleodisp}
\end{align}
where
\begin{equation}
a_n \, = \, \cos \frac{\pi}{2(2n+1)}, \qquad b_n \, = \, \sin \frac{\pi}{2(2n+1)}.
\end{equation}
\end{te}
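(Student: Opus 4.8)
The plan is to mimic the proof of Theorem \ref{pseudokernel}, replacing the symmetric kernel $v_{2n}$ by the asymmetric kernel $v_{2n+1}$ of \eqref{incriminata} and keeping track of the phase shift produced by the odd-order oscillations. First I would write the density of the composition as the mixture
\[
\Pr\left\{ \Theta_{2n+1}\left( H^{\frac{1}{2n+1}}(t)\right) \in d\theta \right\} \, = \, d\theta \int_0^\infty ds \, v_{2n+1}(\theta,s) \, h_{\frac{1}{2n+1}}(s,t),
\]
insert the Fourier series $v_{2n+1}(\theta,s) = \frac{1}{2\pi} + \frac{1}{\pi}\sum_{k\geq 1}\cos\left(k^{2n+1}s + k\theta\right)$, and, after interchanging the $k$-sum with the $s$-integration, reduce the problem to the evaluation of
\[
\int_0^\infty \cos\left( k^{2n+1}s + k\theta \right) h_{\frac{1}{2n+1}}(s,t)\, ds \, = \, \operatorname{Re}\left( e^{ik\theta}\, \mathbb{E}\, e^{i k^{2n+1} H^{\frac{1}{2n+1}}(t)} \right).
\]

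The key computational step is the characteristic function on the right. Using the analytic continuation to the imaginary axis of the Laplace exponent of a positively skewed stable subordinator, $\mathbb{E}\,e^{-\lambda H^\gamma(t)} = e^{-t\lambda^\gamma}$, with $\lambda = -ik^{2n+1}$ and $\gamma = \frac{1}{2n+1}$, and noting $(-ik^{2n+1})^{1/(2n+1)} = k\,e^{-i\pi/(2(2n+1))} = k(a_n - i b_n)$, one obtains $\mathbb{E}\,e^{ik^{2n+1}H^{\frac{1}{2n+1}}(t)} = e^{-a_n k t}\,e^{i b_n k t}$, so that the $k$-th term equals $e^{-a_n k t}\cos\!\left( k(\theta + b_n t)\right)$. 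Writing $r = e^{-a_n t}$ and $\phi = \theta + b_n t$, the density becomes $\frac{1}{2\pi} + \frac{1}{\pi}\sum_{k\geq 1} r^k\cos k\phi = \frac{1}{2\pi}\operatorname{Re}\dfrac{1 + r e^{i\phi}}{1 - r e^{i\phi}} = \dfrac{1}{2\pi}\dfrac{1-r^2}{1 + r^2 - 2r\cos\phi}$, which is exactly \eqref{nucleodispariteo}.

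For the distribution function \eqref{ripartiznucleodisp} I would integrate this expression in $\psi$ from $0$ to $\theta$; the substitution $x = \psi + b_n t$ turns the integral into $\frac{1-r^2}{2\pi}\int_{b_n t}^{\theta + b_n t} \frac{dx}{1 + r^2 - 2r\cos x}$, to which formula \eqref{primitivanucleo} of \cite{G-R} applies with $a = 1+r^2$, $b = -2r$, $a^2 - b^2 = (1-r^2)^2$ and $a+b = (1-r)^2$, giving antiderivative $\frac{1}{\pi}\arctan\!\left(\frac{1+r}{1-r}\tan\frac{x}{2}\right)$. The lower limit then produces the extra subtracted term $\frac{1}{\pi}\arctan\!\left(\frac{1+r}{1-r}\tan\frac{b_n t}{2}\right)$, and the usual $+1$ correction appears as $\frac{\theta + b_n t}{2}$ crosses $\pi$ because of the branch of $\arctan$, which accounts for the two cases in \eqref{ripartiznucleodisp}. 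One should also record that $a_n^2 > b_n^2$ for every $n\geq 1$ (since $\frac{\pi}{2(2n+1)} < \frac{\pi}{4}$), so that $0 < r < 1$: this guarantees both that the kernel is a genuine probability density and that the hypothesis $a^2 > b^2$ of \eqref{primitivanucleo} holds.

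The main obstacle is rigour rather than algebra. As already acknowledged after \eqref{incriminata}, the series defining $v_{2n+1}$ is not absolutely convergent, so the interchange of summation and the $s$-integration, together with the evaluation of $\mathbb{E}\,e^{ik^{2n+1}H^{\frac{1}{2n+1}}(t)}$ on the imaginary axis, must be justified by a regularization — for instance inserting a factor $e^{-\zeta s}$ with $\zeta>0$, carrying out every manipulation for $\operatorname{Re}\lambda = \zeta > 0$ where the stable Laplace transform is unambiguous, and letting $\zeta \downarrow 0$ at the end, exactly the device used above for the Fourier coefficients \eqref{coeffcasodispari}. Alternatively one may start directly from the generalized-gamma representation \eqref{rapprprob}–\eqref{214}, compose it with $H^{\frac{1}{2n+1}}$ before computing Fourier coefficients, and use the known law of $G^{2n+1}\!\left(1/H^{\frac{1}{2n+1}}(t)\right)$; this route trades the imaginary-axis continuation for a change-of-variables in the Bochner-type integral and tends to be the cleaner one for a fully rigorous argument.
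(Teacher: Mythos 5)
Your proposal is correct and follows essentially the same route as the paper's proof: expand $v_{2n+1}$ in its Fourier series, integrate term by term against $h_{\frac{1}{2n+1}}(s,t)$ using $\mathbb{E}\,e^{ik^{2n+1}H^{\frac{1}{2n+1}}(t)} = e^{-a_n k t}e^{ib_n k t}$ to get $e^{-a_n kt}\cos\left(k(\theta+b_n t)\right)$, sum the resulting series to the Poisson kernel, and obtain the distribution function from formula \eqref{primitivanucleo} with the branch correction of the arctangent. Your added remarks on regularizing the non-absolutely-convergent series and the alternative via the wrapped-up Cauchy law of $X_{2n+1}\left(H^{\frac{1}{2n+1}}(t)\right)$ are consistent with (indeed slightly more careful than) what the paper does.
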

\begin{proof}
Let $h_{\frac{1}{2n+1}} (s, t)$, $s, t >0$, be the density of a positively skewed stable process of order $\frac{1}{2n+1}$. Then, in view of \eqref{leggipseudoteo}, we have that
\begin{align}
\Pr \left\lbrace \Theta_{2n+1} \left( H^{\frac{1}{2n+1}} (t) \right) \in d\theta \right\rbrace \, = \, &  d\theta \int_0^\infty ds \, \left( \frac{1}{2\pi} + \frac{1}{\pi} \sum_{k=1}^\infty \cos \left( k\theta + k^{2n+1} s \right) \right) \, h_{\frac{1}{2n+1}} (s, t) \notag \\
= \, & d\theta \left[ \frac{1}{2\pi} + \frac{1}{\pi} \sum_{k=1}^\infty e^{-a_n t k} \cos  \left( k (\theta + b_nt)  \right) \right] \notag \\
= \, & \frac{d\theta}{2\pi} \left[ 1+ \frac{e^{i\theta} e^{-t (a_n - i b_n)}}{1-e^{i\theta} e^{-t(a_n - i b_n)}} + \frac{e^{-i\theta} e^{-t (a_n + i b_n)}}{1-e^{-i\theta} e^{-t(a_n + i b_n)}} \right] \notag \\
= \, & \frac{d\theta}{2\pi} \frac{1-e^{-2a_nt}}{1+e^{-2a_nt}-2e^{-a_nt}\cos \left( \theta + b_nt \right)}.
\end{align}
The same result can be obtained by considering that $X_{2n+1} \left( H^{\frac{1}{2n+1}} (t) \right)$ has the following Cauchy distribution (see \cite{ecporsdov})
\begin{equation}
\Pr \left\lbrace X_{2n+1} \left( H^{\frac{1}{2n+1}} (t) \right) \in dx \right\rbrace  \, = \, \frac{t\cos \frac{\pi}{2(2n+1)}}{\pi \left[ \left( x+t \sin \frac{\pi}{2(2n+1)} \right)^2 + t^2 \cos^2 \frac{\pi}{2(2n+1)} \right]}dx.
\label{cauchyasym}
\end{equation}
By wrapping up \eqref{cauchyasym} we arrive at \eqref{nucleodispariteo} in an alternative way.
In view of \eqref{primitivanucleo} we can write
\begin{align}
& \Pr \left\lbrace \Theta_{2n+1} \left( H^{\frac{1}{2n+1}} (t) \right) < \theta \right\rbrace  \, = \notag \\
= \, & \frac{1}{2\pi} \int_0^\theta dy \, \frac{1-e^{-2a_nt}}{1+e^{-2a_nt}-2e^{-a_nt}\cos \left( y + b_nt \right)} \notag \\
= \, & \begin{cases} \frac{1}{\pi} \left[ \arctan \frac{1+e^{-a_nt}}{1-e^{-a_nt}} \tan \frac{\theta + b_nt}{2} - \arctan \frac{1+e^{-a_nt}}{1-e^{-a_nt}} \tan \frac{b_nt}{2} \right], \quad & 0 < \frac{\theta + b_nt}{2} < \pi,   \\ 1+\frac{1}{\pi} \arctan \frac{1+e^{-a_nt}}{1-e^{-a_nt}} \tan \frac{\theta + b_n t}{2} - \frac{1}{\pi} \arctan \frac{1+e^{-a_n t}}{1-e^{-a_n t}} \tan \frac{b_nt}{2}, & \pi < \theta < 2\pi-\frac{b_nt}{2}.  \end{cases}
\label{mammamia}
\end{align} 
\end{proof}

\begin{figure}[htp!]
  \centering
  \subfloat[]{\label{kernparifig}\includegraphics[width=0.5\textwidth]{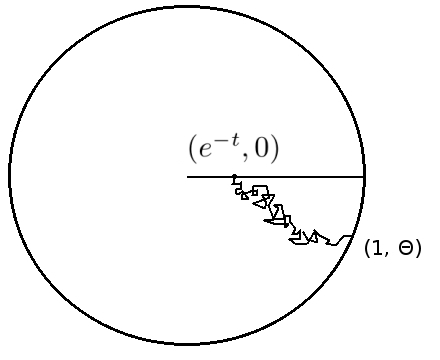}}
  \subfloat[]{\label{kerndispfig}\includegraphics[width=0.5\textwidth]{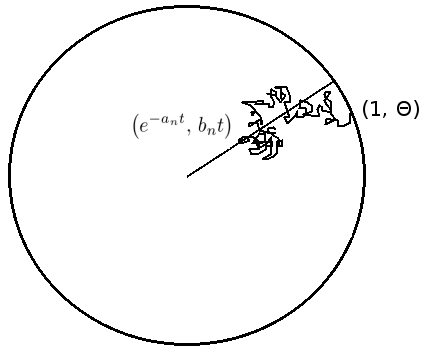}}
  \caption{The distribution of the hitting point of a planar Brownian motion is obtained as subordinated circular pseudoprocess in the even case (Fig. \ref{kernparifig}) and odd case (Fig. \ref{kerndispfig}).}
 \end{figure}
\begin{figure} [htp!]
		\centering
		\includegraphics[scale=0.23]{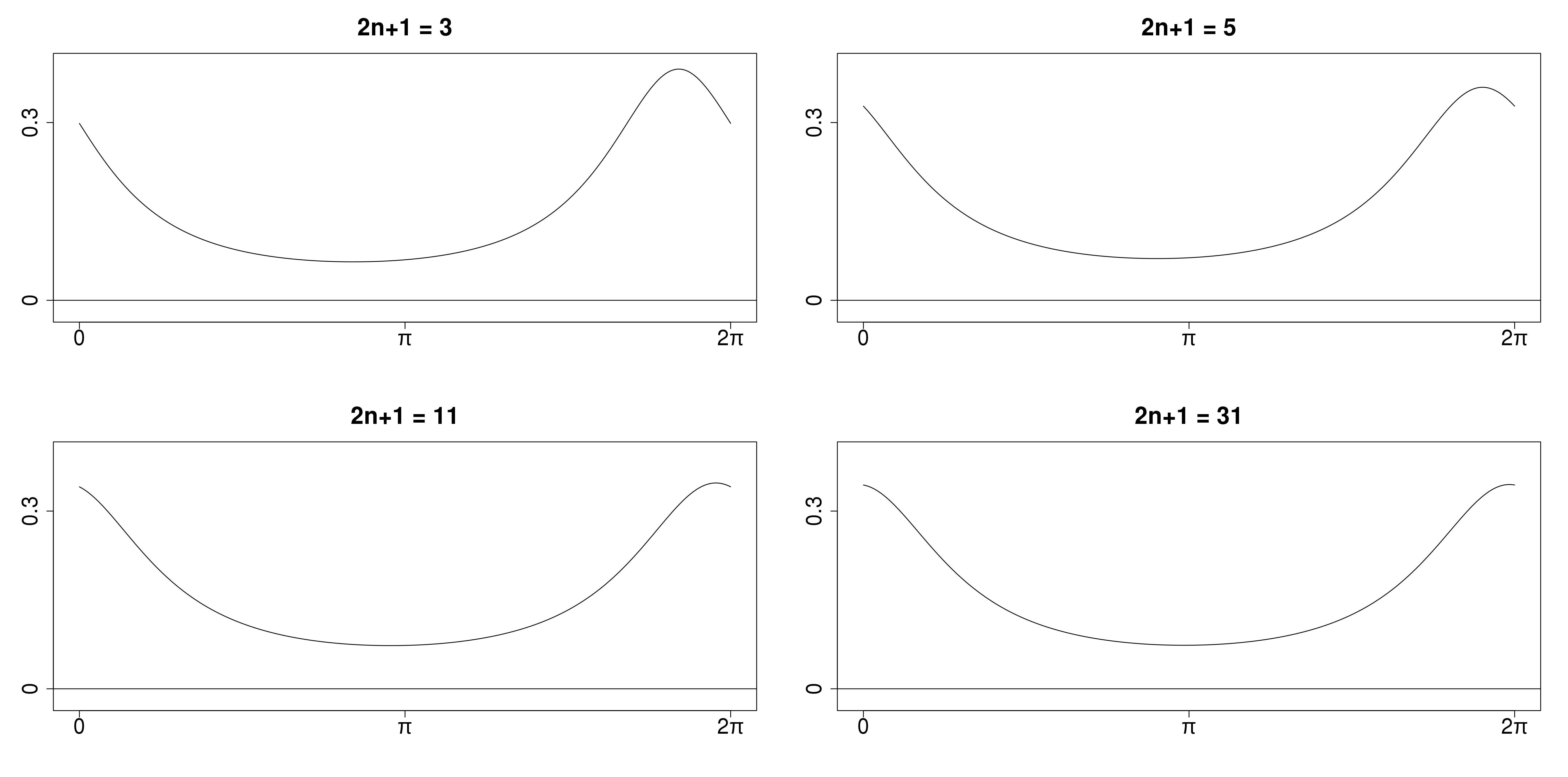} 
		\caption{Distributions related to odd-order Poisson kernels (for $t=1$)}
		\label{}
\end{figure}

\begin{os}
From \eqref{mammamia} we arrive at the following fine expression
\begin{align}
&\Pr \left\lbrace \Theta_{2n+1} \left( H^{\frac{1}{2n+1}} (t) \right) < \theta \right\rbrace \notag \\
 = \, & \frac{1}{\pi} \arctan \left[ \frac{\left( 1-e^{-2a_nt} \right) \tan \frac{\theta}{2} \left( 1+\tan^2 \frac{b_nt}{2} \right)}{ \left( 1-e^{-a_nt} \right)^2 + 4 \tan \frac{\theta}{2} \tan \frac{b_nt}{2} + \left( 1+e^{-a_nt} \right)^2 \tan^2 \frac{b_nt}{2} } \right], \qquad \theta \in [0,2\pi),
\end{align}
from which we are able to explicitely write for $\Theta_{2n+1} \left( H^{\frac{1}{2n+1}} (t) \right)$ the probability of staying in the interval $(0, \pi)$ as
\begin{equation}
\Pr \left\lbrace 0 < \Theta_{2n+1} \left( H^{\frac{1}{2n+1}} (t) \right) < \pi \right\rbrace \, = \, \frac{1}{\pi} \arctan \frac{\sinh a_nt}{\sin b_n t}, \qquad \forall t >0,
\end{equation}
while for $\left( 0,\frac{\pi}{2} \right)$ we obtain
\begin{align}
& \Pr \left\lbrace 0< \Theta_{2n+1} \left( H^{\frac{1}{2n+1}} (t) \right) < \frac{\pi}{2} \right\rbrace \notag \\
 = \, & \frac{1}{\pi} \arctan \frac{\left( 1-e^{-2a_nt} \right) \left( 1+ \tan^2 \frac{b_nt}{2} \right)}{ \left( 1-e^{-a_nt} \right)^2 + 4 \tan \frac{b_nt}{2} + \left( 1+e^{-a_nt} \right)^2 \tan^2 \frac{b_nt}{2} } \notag \\
= \, & \frac{1}{\pi} \arctan \frac{ \sinh a_n t}{2 \sinh^2 \frac{a_nt}{2} \cos^2 \frac{b_nt}{2} + e^{a_nt} \sin b_nt + 2 \cosh^2 \frac{a_nt}{2}\sin^2 \frac{b_nt}{2}} \notag \\
= \, & \frac{1}{\pi} \arctan \frac{\sinh a_n t}{\cosh a_n t - \cos b_n t + e^{a_nt} \sin b_n t}.
\label{chepalle}
\end{align}
By means of the same manipulations leading to \eqref{chepalle} we arrive at the alternative form of the distribution function for $\theta \in [0,\pi]$,
\begin{align}
\Pr \left\lbrace 0 < \Theta_{2n+1} \left( H^{\frac{1}{2n+1}} (t) \right)  < \theta \right\rbrace \, = \, \frac{1}{\pi} \arctan \frac{\sinh a_n t \, \tan \frac{\theta}{2}}{\cosh a_n t - \cos b_n t + e^{a_nt} \sin b_n t \, \tan \frac{\theta}{2}}.
\end{align}
\end{os}

\begin{os}
In the third-order case we can arrive at the Poisson kernel \eqref{nucleodispariteo} for $n=1$ by considering that (see \cite{ecporsdov})
\begin{align}
\frac{\Pr \left\lbrace X_3 \left( H^{\frac{1}{3}} (t) \right) \in dx \right\rbrace}{ dx } \, = \, & \int_0^\infty \frac{ds}{\sqrt[3]{3s}} \, \textrm{Ai} \left( \frac{x}{\sqrt[3]{3s}} \right) \, \frac{t}{s\sqrt[3]{3s}} \, \textrm{Ai} \left( \frac{t}{\sqrt[3]{3s}} \right) \notag \\
 = \, & \frac{\sqrt{3}t}{2 \pi \left[ \left( x+ \frac{t}{2} \right)^2 + \frac{3t^2}{4} \right]}.
\label{integrairy}
\end{align}
The wrapped up counterpart of \eqref{integrairy} becomes for $\theta \in \left[ 0,2\pi \right)$
\begin{align}
& \Pr \left\lbrace \Theta_3 \left( H^{\frac{1}{3}} (t) \right) \in d\theta \right\rbrace \, = \,  \sum_{m=-\infty}^\infty  \Pr \left\lbrace X_3 \left( H^{\frac{1}{3}} (t) \right)  \in d(\theta + 2m\pi) \right\rbrace \, = \notag \\
= \, &  d\theta \sum_{m=-\infty}^\infty \int_0^\infty \frac{ds}{\sqrt[3]{3s}}  \textrm{Ai} \left( \frac{\theta + 2m\pi}{\sqrt[3]{3s}} \right)  \frac{t}{s\sqrt[3]{3s}}  \textrm{Ai} \left( \frac{t}{\sqrt[3]{3s}} \right) \notag \\
= \, & d\theta \frac{\sqrt{3}t}{2\pi} \sum_{m=-\infty}^\infty \frac{t}{\left( \theta + 2m\pi + \frac{t}{2} \right)^2 + 3\frac{t^2}{4}} \, = \,  \frac{d\theta \sqrt{3} t}{2} \sum_{m=-\infty}^\infty \int_0^\infty ds \frac{e^{-\frac{\left( \theta + 2m\pi + \frac{t}{2} \right)^2}{2s}}}{\sqrt{2\pi s}} \frac{e^{-\frac{\left( \sqrt{3}t /2 \right)^2}{2s}}}{\sqrt{2\pi s^3}} \notag \\
= \, & \frac{d\theta \sqrt{3}t}{2^2\pi} \int_0^\infty ds \left[ 1+2\sum_{k=1}^\infty \cos \left[ k \left( \theta + \frac{t}{2} \right) \right] e^{-\frac{k^2s}{2}} \right] \frac{e^{-\frac{\left( \sqrt{3}t /2 \right)^2}{2s}}}{\sqrt{2\pi s^3}}  \notag \\
 = \,  & \frac{d\theta}{2\pi} \left[ 1+2\sum_{k=1}^\infty e^{-\frac{\sqrt{3}}{2}tk} \cos \left[ k \left( \theta + \frac{t}{2} \right) \right] \right] \, = \,  \frac{d\theta}{2\pi} \frac{1-e^{-\sqrt{3}t}}{1+e^{-\sqrt{3}t}-2e^{-\sqrt{3}t} \cos \left( \theta + \frac{t}{2} \right)},
\end{align}
which coincides with \eqref{nucleodispariteo} since $a_1 = \frac{\sqrt{3}}{2}$ and $b_1 = \frac{1}{2}$.
\end{os}

\end{document}